\newtheorem{as}{Assumption}
\newtheorem{lm}{Lemma}
\newtheorem{df}{Definition}
\newtheorem{rmk}{Remark}
\newtheorem{thm}{Theorem}
\newcommand{\norm}[1]{\left\lVert #1 \right\rVert}
\title{\LARGE \bf
Asynchronous Zeroth-Order Distributed Optimization \\ with Residual Feedback
}
\author{Yi Shen, Yan Zhang, Scott Nivison, Zachary I. Bell and Michael M. Zavlanos
\thanks{*This work is supported in part by AFOSR under award \#FA9550-19-1-0169
and by NSF under award CNS-1932011.}
\thanks{Yi Shen, Yan Zhang and Michael M. Zavlanos are with the Department of Mechanical Engineering and Materials Science, Duke University, Durham, NC, USA. Email: \{yi.shen478, yan.zhang2, michael.zavlanos\}@duke.edu}%
\thanks{Scott Nivison and  Zachary I. Bell are with the Air Force Research Laboratory, Eglin AFB, FL, USA. Email: \{scott.nivison, zachary.bell.10\}@us.af.mil.}%
}
\begin{document}

\maketitle
\thispagestyle{empty}
\pagestyle{empty}


\begin{abstract}
    We consider a zeroth-order distributed optimization problem, where the global objective function is a black-box function and, as such, its gradient information is inaccessible to the local agents. Instead, the local agents can only use the values of the objective function to estimate the gradient and update their local decision variables. In this paper, we also assume that these updates are done asynchronously. To solve this problem, we propose an asynchronous zeroth-order distributed optimization method that relies on a one-point residual feedback to estimate the unknown gradient. We show that this estimator is unbiased under asynchronous updating, and theoretically analyze the convergence of the proposed method. We also present numerical experiments that demonstrate that our method outperforms two-point methods under asynchronous updating. To the best of our knowledge, this is the first asynchronous zeroth-order distributed optimization method that is also supported by theoretical guarantees.
\end{abstract}


\section{INTRODUCTION}
Distributed optimization algorithms have been used to solve decision making problems in a wide range of application domains, including distributed machine learning~\cite{konevcny2016federated,zhang2019distributed}, resource allocation~\cite{schmidt2009distributed} and robotics~\cite{raffard2004distributed}, to name a few. In these problems, agents aim to find their local optimal decisions so that a global cost function that depends on the joint decisions is minimized. 
Existing algorithms, e.g.,~\cite{boyd2011distributed,nedic2009distributed,chatzipanagiotis2015augmented,zhang2018consensus} often assume that the gradients of the objective function is known and available to the agents. However, this is not always the case in practice. For example, complex systems are often difficult to model explicitly~\cite{bergstra2012random,tu2019autozoom}. Similarly, in applications such as online marketing~\cite{bertsimas2007learning} and multi-agent games~\cite{mertikopoulos2019learning}, the decisions of other agents cannot be observed and, therefore, the gradient of the objective function cannot be locally computed. Finally, many distributed optimization algorithms assume that all agents update their local decisions at the
same time, which requires synchronization over the whole network and can be expensive to implement.

In this paper, we consider distributed optimization problems where a group of agents collaboratively minimize a common cost function that depends on their joint decisions. Moreover, we assume that the agents can only observe and update their local decision variables, and that the gradient of the common objective function with respect to each agent's local decision is not accessible. To solve this problem, zeroth-order optimization methods \cite{flaxman2004online,nesterov2017random,duchi2015optimal,zhang2020improving} have been proposed that estimate the gradient using the values of the objective function. Existing zeroth-order gradient estimators can be classified into two categories, namely, one-point feedback~\cite{flaxman2004online,zhang2020improving} and two-point feedback~\cite{nesterov2017random,duchi2015optimal} estimators, depending on the number of decision points they query at each iteration. The first one-point gradient estimator is analyzed in~\cite{flaxman2004online} and has the form
\begin{equation}\label{one_point}
    G_{{\mu}}(x_k)=\frac{f(x_k+\mu u_k)}{\mu}u_k,
\end{equation}
where  $f$ is a cost function, $x_k$ is a decision variable at time $k$,
$\mu$ is a smoothing parameter, $u$ is sampled from a normal Gaussian distribution $\mathcal{N}(0,I_n)$ and $I_n$ is an $n$ dimensional identity matrix. The estimator~\eqref{one_point} requires to evaluate the objective function at a single point $x_k+\mu u_k$ at iteration $k$ but usually suffers large variance which slows down the optimization process. To reduce the variance of the one-point gradient estimator~\eqref{one_point}, the works in \cite{nesterov2017random,duchi2015optimal} study the two-point gradient estimators
\begin{equation}\label{two_point_unbiased}
\begin{split}
     G_{\mu}(x_k)=\frac{f(x_k+\mu u_k)-f(x_k)}{\mu}u_k  \\
\end{split}
\end{equation}
\begin{equation}\label{two_point_biased}
\begin{split}
   \text{and } \; G_{\mu}(x_k)=\frac{f(x_k+\mu u_k)-f(x_k-\mu u_k)}{2\mu}u_k,
\end{split}
\end{equation}
that evaluate the objective function at two distinct decision points at iteration $k$. Recently, a new one-point gradient estimator has been proposed in \cite{zhang2020improving}, called the residual-feedback gradient estimator,
\begin{equation}\label{residual-feedback}
    \begin{split}
      G_{\mu}(x_k) = \frac{f(x_k+\mu u_k)-f(x_{k-1}+\mu u_{k-1})}{\mu}u_k
    \end{split}
\end{equation}
that enjoys the same variance reduction effect of the two-point gradient estimators~\eqref{two_point_unbiased} and \eqref{two_point_biased} but only requires to evaluate the objective function at a single decision point at each iteration. 
We note that verbatim application of the above centralized gradient estimators to the distributed problem considered in this paper requires synchronization across the agents. This is because the perturbation is according to the full decision vector and required to be implemented simultaneously. To the best of our knowledge, asynchronous zeroth-order distributed optimization methods have not been studied in the literature. If the objective function gradient is known, asynchronous distributed optimization methods with a common objective function have been studied in \cite{tsitsiklis1986distributed,agarwal2012distributed,zhong2008asynchronous}. However, these works can not be directly extended to solve the black-box optimization problems considered here.

In this paper, we consider a model for asynchrony where a single agent is randomly activated at each time step to query the objective function value at one decision point or update its local decision variable. Then, we propose an asynchronous zeroth-order distributed optimization algorithm that relies on extending the centralized residual-feedback gradient estimator~\eqref{residual-feedback} so that it can handle asynchronous queries and updates. Specifically, we show that the proposed zeroth-order gradient estimator provides an unbiased estimate of the gradient with respect to each agent's local decision. Also, we provide bounds on the second moment of this estimator, the first of their kind for any asynchronous zeroth-order gradient of this type, which we then use to show convergence of the proposed method. As expected, the variance of this estimator is greater than he variance of  the estimator in~\eqref{residual-feedback}, since other agents in the network can update their decision variables between an agent's two consecutive queries of the objective function.

We note that, almost concurrently with this work~\cite{cai2021zeroth} proposed an asynchronous zeroth-order optimization algorithm that relies on the two-point gradient estimator~\eqref{two_point_biased}. Unlike the method proposed here, \cite{cai2021zeroth} assumes that while a single agent queries the values of the objective function at decision points $x_k + \mu u_k$ and $x_k$ (or $x_k - \mu u_k$), the other agents do not update their decision variables, even if any of them are activated. This assumption limits the number of updates the agents can make during a fixed period of time and affects the performance of the asynchronous system. Related is also work on distributed zeroth-order methods for the optimization of functions that are the sum of local objective functions, see~\cite{hajinezhad2017zeroth,hajinezhad2018gradient,zhang2020cooperative,tang2020zeroth}. 
However, these methods assume synchronous updates.

The rest of this paper is organized as follows. In Section~\ref{formulation}, we formulate the problem under consideration and present preliminary results on zeroth-order gradient estimators. In Section~\ref{algorith_proof}, we present the proposed asynchronous zeroth-order distributed optimization algorithm with residual feedback gradient estimation, and analyze its convergence. In Section~\ref{simulation}, we numerically validate the proposed algorithm and in Section~\ref{conclusion} we conclude the paper.

\section{PROBLEM FORMULATION AND PRELIMINARIES}\label{formulation}

%
Consider a multi-agent system consisting of $N$ agents that collaboratively solve the unconstrained optimization problem
\begin{align}
    \min_{x} \; f(x),
\end{align}
where the cost function $f$ is non-convex and smooth, $x:=(x^1,\ldots,x^N)\in \mathbb{R}^{n}$ is the joint decision vector, and $x^i \in  \mathbb{R}^{n_i}$ is the local decision vector of agent $i\in\{1,\dots,N\}$. We first make the following assumptions on the cost function $f$.
\begin{as}\label{lipschitz}
The cost function $f(x):\mathbb{R}^n\to \mathbb{R}$ is bounded below by $f^*$. It is $L_0$-Lipschitz and $L_1$-smooth, i.e.,
\begin{equation*}
    \begin{split}
        |f(x)-f(y)|&\leq L_0\norm{x-y}, \\
        \norm{\nabla f(x)-\nabla f(y)} & \leq L_1\norm{x-y},
    \end{split}
\end{equation*}
for all $x,y\in\mathbb{R}^n.$
\end{as}
As shown in~\cite{nesterov2017random}, $L_1$-smoothness is equivalent to the condition;
\begin{equation}\label{equivalent_smooth}
    |f(y)-f(x)-\langle \nabla f(x),y-x \rangle| \leq \frac{1}{2}L_1\norm{x-y}^2,
\end{equation}
for all $x,y\in\mathbb{R}^n$. For each agent $i$, define the local smoothing function:
\begin{equation}\label{local_smooth}
    f_{\mu_i}(x)=\frac{1}{\kappa_i}\int f(x+\mu_i u_i )e^{-\frac{1}{2}\norm{u_i}^2} du^i_i,
\end{equation}
where $\kappa_i=\int e^{-\frac{1}{2}\norm{u_i}^2} du^i_i$. The random sampling vector $u_{i}=\{u^1_i,\dots,u^N_i\}\in \mathbb{R}^n$ is a vector of all zeros except for the entry $u^i_i$ that is sampled from $\mathcal{N}(0,I_{n_i}).$\footnote{In the following analysis, we drop the agent index $i$ in $u_{i}$ for simplicity.}
Note that $f_{\mu_i}$ preserves all the Lipschitz conditions of $f$ as proved in \cite{nesterov2017random}. Specifically, we have the following lemma.
\begin{lm}
Under Assumption~$\ref{lipschitz}$, we have that , for all agents $i$, $f_{\mu_i}(x):\mathbb{R}^n\to \mathbb{R}$ is $L_0$-Lipschitz and $L_1$-smooth.
\end{lm}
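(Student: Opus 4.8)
The plan is to show that the local smoothing function $f_{\mu_i}$ inherits the $L_0$-Lipschitz and $L_1$-smooth properties directly from $f$ by exploiting the fact that $f_{\mu_i}$ is an expectation of shifted copies of $f$. First I would rewrite the definition~\eqref{local_smooth} as an expectation, $f_{\mu_i}(x) = \mathbb{E}_{u}\!\left[ f(x + \mu_i u) \right]$, where $u$ has only its $i$-th block $u_i^i$ nonzero and distributed as $\mathcal{N}(0, I_{n_i})$. The key structural observation is that the smoothing operator commutes with translation: for any $x, y$, the change of variables inside the integral shows $f_{\mu_i}(x) - f_{\mu_i}(y) = \mathbb{E}_{u}\!\left[ f(x + \mu_i u) - f(y + \mu_i u) \right]$.

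For the $L_0$-Lipschitz bound I would apply Jensen's inequality (or simply monotonicity of the integral) to move the norm inside the expectation:
\begin{equation*}
    \norm{f_{\mu_i}(x) - f_{\mu_i}(y)} \leq \mathbb{E}_{u}\!\left[ \left| f(x + \mu_i u) - f(y + \mu_i u) \right| \right] \leq \mathbb{E}_{u}\!\left[ L_0 \norm{(x + \mu_i u) - (y + \mu_i u)} \right] = L_0 \norm{x - y},
\end{equation*}
using Assumption~\ref{lipschitz} in the middle step and the fact that the perturbation $\mu_i u$ cancels. For the $L_1$-smoothness I would first argue that the gradient also passes through the expectation, i.e. $\nabla f_{\mu_i}(x) = \mathbb{E}_{u}\!\left[ \nabla f(x + \mu_i u) \right]$ — this requires justifying differentiation under the integral sign, which is legitimate here because $\nabla f$ is continuous (from $L_1$-smoothness) and the Gaussian weight provides a dominating integrable envelope together with the linear growth of $\nabla f$. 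Then the same Jensen-plus-translation argument gives
\begin{equation*}
    \norm{\nabla f_{\mu_i}(x) - \nabla f_{\mu_i}(y)} \leq \mathbb{E}_{u}\!\left[ \norm{\nabla f(x + \mu_i u) - \nabla f(y + \mu_i u)} \right] \leq L_1 \norm{x - y}.
\end{equation*}

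The main obstacle, and the only step that is not a one-line estimate, is the interchange of gradient and expectation needed to establish $\nabla f_{\mu_i}(x) = \mathbb{E}_{u}[\nabla f(x + \mu_i u)]$; everything else follows from Jensen's inequality and Assumption~\ref{lipschitz}. Since the paper already cites~\cite{nesterov2017random} for exactly this fact in the fully-smoothed (all-coordinates) case, I would note that the argument there applies verbatim to the block-coordinate smoothing used here — the only difference is that the Gaussian is supported on the $n_i$-dimensional subspace corresponding to agent $i$, which does not affect any of the integrability or continuity arguments. Hence the lemma is essentially a restatement of a known result, specialized to partial smoothing, and the proof can be kept brief.
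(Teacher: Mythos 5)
Your proposal is correct and is essentially the argument the paper relies on: the paper gives no proof of its own, simply citing \cite{nesterov2017random}, and your Jensen-plus-translation argument (with differentiation under the integral sign for the smoothness part) is precisely the standard proof from that reference, correctly adapted to the block-coordinate Gaussian smoothing. The only cosmetic nit is that $f_{\mu_i}(x)-f_{\mu_i}(y)$ is a scalar, so the norm in your first display should be an absolute value.
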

As a result, (\ref{equivalent_smooth}) also holds for $f_{\mu_i}$, which allows us to bound the approximation errors of $f_{\mu_i}$ and $\nabla_i f_{\mu_i}$ with respect to (w.r.t.) to $f$ and $\nabla_i f,$ as shown in Lemma 2 below that is adopted from~\cite{nesterov2017random}. In Lemma 2 and the following analysis, we denote by $\nabla f(x)\in\mathbb{R}^n$ the gradient of $f(x)$. Moreover, we define by $\nabla_i f(x)\in\mathbb{R}^n$ the projection of $\nabla f(x)$ onto the index $i$ by setting the entries of $\nabla f(x)$ not equal to $i$ to be 0. 
\begin{lm}\label{approximation_errors}
Under Assumption~\ref{lipschitz}, the cost function $f$ and its corresponding smoothed function $f_{\mu_i}$ satisfy  $\forall i\in\{1,\dots,N\}$  
\begin{equation}\label{function_error}
 |f_{\mu_i}(x)-f(x)|\leq \frac{\mu_i^2}{2}L_1n_i,
\end{equation}
\begin{equation}\label{gradient_error}
    \norm{\nabla_i f_{\mu_i}(x)-\nabla_i f(x)}\leq \frac{\mu_i}{2}L_1 (n_i+3)^{3/2}.
\end{equation}
\end{lm}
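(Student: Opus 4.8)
The plan is to reproduce, in the block-coordinate setting, the classical Gaussian-smoothing estimates of Nesterov and Spokoiny~\cite{nesterov2017random}, the only novelty being that the perturbation $u$ acts on a single coordinate block. Throughout I would write $u$ for the random vector in \eqref{local_smooth} that is zero outside block $i$ and whose block-$i$ part $u^i_i$ is standard Gaussian, so that $f_{\mu_i}(x)=\mathbb{E}_u[f(x+\mu_i u)]$, $\mathbb{E}_u[u]=0$, $\mathbb{E}_u[\norm{u}^2]=\mathbb{E}[\norm{u^i_i}^2]=n_i$, and $\mathbb{E}_u[u\langle g,u\rangle]$ equals the block-$i$ projection of $g$ for any fixed $g$.

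For \eqref{function_error}, I would use evenness of the Gaussian density to write $f_{\mu_i}(x)-f(x)=\mathbb{E}_u\big[f(x+\mu_i u)-f(x)-\langle\nabla f(x),\mu_i u\rangle\big]$, since the linear term integrates to zero. Applying the $L_1$-smoothness characterization \eqref{equivalent_smooth} pointwise inside the expectation gives $|f_{\mu_i}(x)-f(x)|\le\tfrac12 L_1\mu_i^2\,\mathbb{E}_u[\norm{u}^2]=\tfrac12 L_1\mu_i^2 n_i$, which is \eqref{function_error}.

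For \eqref{gradient_error}, I would first obtain an integral representation of $\nabla_i f_{\mu_i}(x)$: changing variables $y^i=x^i+\mu_i u^i$ in \eqref{local_smooth}, differentiating under the integral with respect to $x^i$ (only the exponential depends on $x^i$, and differentiation under the integral is justified by the $L_0$-Lipschitz bound on $f$ together with dominated convergence), and changing variables back, one gets $\nabla_i f_{\mu_i}(x)=\mathbb{E}_u\big[\tfrac{f(x+\mu_i u)-f(x)}{\mu_i}\,u\big]$, where $f(x)$ was subtracted for free using $\mathbb{E}_u[u]=0$. Subtracting $\nabla_i f(x)=\mathbb{E}_u[u\langle\nabla_i f(x),u\rangle]$ yields $\nabla_i f_{\mu_i}(x)-\nabla_i f(x)=\mathbb{E}_u\big[\big(\tfrac{f(x+\mu_i u)-f(x)}{\mu_i}-\langle\nabla_i f(x),u\rangle\big)u\big]$. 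Taking norms, using Jensen and Cauchy--Schwarz, and noting that $\langle\nabla_i f(x),u\rangle=\langle\nabla f(x),u\rangle$ because $u$ is supported on block $i$, the bound \eqref{equivalent_smooth} applied to $x$ and $x+\mu_i u$ gives $\big|\tfrac{f(x+\mu_i u)-f(x)}{\mu_i}-\langle\nabla f(x),u\rangle\big|\le\tfrac12 L_1\mu_i\norm{u}^2$, hence $\norm{\nabla_i f_{\mu_i}(x)-\nabla_i f(x)}\le\tfrac12 L_1\mu_i\,\mathbb{E}_u[\norm{u}^3]$.

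The remaining ingredient, and the only mildly delicate point, is the moment estimate $\mathbb{E}_u[\norm{u}^3]=\mathbb{E}[\norm{u^i_i}^3]\le(n_i+3)^{3/2}$, which I would invoke from the standard inequality $\mathbb{E}[\norm{\xi}^p]\le(n_i+p)^{p/2}$ for a standard $n_i$-dimensional Gaussian $\xi$ and $p\ge2$ (provable via Hölder/Jensen on $\chi^2$ moments, as in~\cite{nesterov2017random}); taking $p=3$ and combining with the previous display gives \eqref{gradient_error}. The main obstacle here is not conceptual but bookkeeping: keeping the block-restricted perturbation consistent so that every estimate for $\nabla_i f_{\mu_i}$ involves only the $n_i$-dimensional Gaussian $u^i_i$ and the block-$i$ component of $\nabla f$, and invoking the moment bound with dimension $n_i$ rather than the ambient dimension $n$.
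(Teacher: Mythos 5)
Your proof is correct and is essentially the paper's own approach: the paper gives no proof of Lemma~\ref{approximation_errors}, simply adopting it from Nesterov and Spokoiny~\cite{nesterov2017random}, and your argument is exactly their standard Gaussian-smoothing proof (subtract the mean-zero linear term, apply \eqref{equivalent_smooth}, use the representation $\nabla_i f_{\mu_i}(x)=\mathbb{E}_u[\mu_i^{-1}(f(x+\mu_i u)-f(x))u]$ and the moment bound $\mathbb{E}[\norm{u}^3]\le (n_i+3)^{3/2}$), correctly transplanted to the block-restricted perturbation so that all dimensions are $n_i$ rather than $n$.
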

Next, we define the model that the agents use to asynchronously update their decision variables. 
\begin{df}[Asynchrony Model]\label{as}
At each time step, one agent is independently and randomly selected according to a fixed distribution $P = [p_1, \dots, p_N]$. The selected agent $i$ can query the value of the cost function\footnote{Here, we assume that each agent receives noiseless feedback $f(x)$. The proposed method can be extended to noisy feedback with bounded variance.} once and update its local decision variable, while the decisions of the other agents $\{x^j\}_{j\neq i}$ are fixed. The agents only communicate with a central entity that has access to the global cost function but not with each other.
\end{df}

\begin{rmk}\label{remark1}
Definition~\ref{as} can be satisfied when all agents make queries and update their decisions according to their local clock without any central coordination. Specifically, let the time interval between each agent's consecutive queries be called a waiting time. Then, if each local waiting time is random and subject to an exponential distribution, according to Chapter 2.1 in~\cite{durrett1999essentials}, Definition~\ref{as} will be satisfied.
\end{rmk}

%
\section{ALGORITHM DESIGN AND ANALYSIS}\label{algorith_proof}

In this section, we present the proposed asynchronous zeroth-order distributed optimization algorithm and analyze its convergence rate.
To do so, we first propose an asynchronous zeroth-order gradient estimator based on the centralized residual feedback estimator~\eqref{residual-feedback}. For every agent $i$, at time step $k$, this estimator takes the form
\begin{equation}\label{residual}
     G_{\mu_i}(x_k) =  \frac{f(x_k+\mu_i u_{k})-f(x_{k-M}+\mu_i u_{k-M})}{\mu_i}u_{k},
\end{equation}
where $k-M$ is a random index denoting the iteration when agent $i$ conducted its most recent update. This index takes values on a global time scale. The random sampling vector $u_{k}$ is as defined in~(\ref{local_smooth}).
Note that $G_{\mu_i}$ is different from the centralized zeroth-order gradient estimator~\eqref{residual-feedback}, where $u_k$ is a perturbation along the full decision vector $x$.
Here, $G_{\mu_i}$ estimates the gradient by perturbing the function $f$ along a random direction restricted to agent $i$'s block of the full decision vector $x$ and uses the previous query to reduce the variance. 
Indeed, $G_{\mu_i}$ provides an unbiased gradient estimate of the corresponding smoothed function $f_{\mu_i}$ restricted to agent $i$'s block, as shown in the following lemma.  
%

%

\begin{lm}\label{unbiased_gradient}
For each agent $i$, we have that  
$$\mathbb{E}\left[G_{\mu_i}(x_k)\right]=\nabla_i f_{\mu_i}(x_k).$$
\end{lm}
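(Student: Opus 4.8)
The plan is to integrate out the fresh perturbation $u_k$ first, by conditioning on everything that has happened before agent $i$ draws it, and then to recognize the one surviving term as the Gaussian‑smoothing gradient identity.

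Concretely, let $\mathcal{F}_{k-1}$ denote the $\sigma$-algebra generated by all agent activations, sampled directions, queries, and updates through time step $k-1$. Since agent $i$'s most recent prior update occurred at time $k-M$ with $k-M\le k-1$, the delay $M$, the current iterate $x_k$, the stale iterate $x_{k-M}$, and the old direction $u_{k-M}$ are all $\mathcal{F}_{k-1}$-measurable; in particular the stored value $f(x_{k-M}+\mu_i u_{k-M})$ is a constant given $\mathcal{F}_{k-1}$. By contrast, conditioned on agent $i$ being the agent activated at step $k$, the direction $u_k$ is drawn as in~\eqref{local_smooth}, i.e., it is an $\mathcal{N}(0,I_{n_i})$ vector embedded in $\mathbb{R}^n$ with zeros outside block $i$, and it is independent of $\mathcal{F}_{k-1}$.

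Applying the tower property to the estimator~\eqref{residual}, I would split
\begin{equation*}
\mathbb{E}\left[G_{\mu_i}(x_k)\,\middle|\,\mathcal{F}_{k-1}\right]
= \mathbb{E}\left[\frac{f(x_k+\mu_i u_k)}{\mu_i}u_k\,\middle|\,\mathcal{F}_{k-1}\right]
- \frac{f(x_{k-M}+\mu_i u_{k-M})}{\mu_i}\,\mathbb{E}\left[u_k\,\middle|\,\mathcal{F}_{k-1}\right].
\end{equation*}
The second term vanishes because $\mathbb{E}[u_k\mid\mathcal{F}_{k-1}]=\mathbb{E}[u_k]=0$. In the first term $x_k$ is fixed given $\mathcal{F}_{k-1}$, so it reduces to the one-point expectation $\mathbb{E}_u[f(x_k+\mu_i u)u/\mu_i]$ over $u$ supported on block $i$. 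Substituting $y=x_k+\mu_i u$ in the integral~\eqref{local_smooth} defining $f_{\mu_i}$, differentiating under the integral sign (legitimate because $f$ is $L_0$-Lipschitz by Assumption~\ref{lipschitz}, so the integrand and its $x$-derivative are dominated by integrable functions), and changing variables back yields $\mathbb{E}_u[f(x_k+\mu_i u)u/\mu_i]=\nabla_i f_{\mu_i}(x_k)$, with the components outside block $i$ equal to zero, consistent with the definition of $\nabla_i$. Taking the outer expectation over $\mathcal{F}_{k-1}$ then gives $\mathbb{E}[G_{\mu_i}(x_k)]=\nabla_i f_{\mu_i}(x_k)$.

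I expect the only delicate point to be the measurability bookkeeping for the asynchronous model: one must verify that the entire residual numerator apart from the $f(x_k+\mu_i u_k)$ piece --- and in particular the random delay $M$ and the stale iterate $x_{k-M}$, blocks of which other agents may have changed between times $k-M$ and $k$ --- is a deterministic function of $\mathcal{F}_{k-1}$, so that $u_k$ can legitimately be integrated out before everything else. Once the filtration is set up correctly, the remainder is exactly the classical blockwise Gaussian-smoothing identity already underlying~\eqref{local_smooth} and~\cite{nesterov2017random}, and no convexity or smoothness beyond Assumption~\ref{lipschitz} is needed.
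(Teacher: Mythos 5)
Your proposal is correct and follows essentially the same route as the paper's proof: kill the stale term $f(x_{k-M}+\mu_i u_{k-M})u_k/\mu_i$ using the independence of $u_k$ from the past together with $\mathbb{E}[u_k]=0$, then identify the surviving one-point term with $\nabla_i f_{\mu_i}(x_k)$ via the definition of the smoothed function. Your explicit conditioning on $\mathcal{F}_{k-1}$ and the domination argument for differentiating under the integral are just a more careful rendering of the steps the paper states tersely.
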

\begin{proof}
Taking the expectation of both sides of~\eqref{residual}, we obtain that 
\begin{equation*}
\begin{split}
     \mathbb{E}\left[G_{\mu_i}(x_k)\right] & =  \mathbb{E}\left[\frac{f(x_k+\mu_i u_k)-f(x_{k-M}+\mu_i u_{k-M})}{\mu_i}u_k\right] \\
    & = \mathbb{E}\left[\frac{f(x+\mu_i u_k)}{\mu_i}u_k\right]= \nabla_i f_{\mu_i}(x_k),  
\end{split}
\end{equation*}
where the second equality follows from the fact that $x_{k-M}$ and $u_{k-M}$ are independent from $u_k$ and the expectation of $u_k$ is 0 . The last equality follows from the definitions of $f_{\mu_i}$ and $\nabla_i f_{\mu_i}$. 
\end{proof}
\begin{rmk}
Note that both $G_{\mu_i}(x_k)$ and $\nabla_i f_{\mu_i}$ are vectors in $\mathbb{R}^n$ with entries equal to zero at blocks other than $i$.
\end{rmk}
Using the local gradient estimate $G_{\mu_i}(x_k)$, we can define the update rule for every agent i as 
\begin{equation}\label{updating_rule}
    x_{k+1}=x_k-\alpha_i G_{\mu_i}(x_k),
\end{equation} where $\alpha_i$ is the step size.
The proposed asynchronous zeroth-order distributed optimization algorithm with residual feedback is described in Algorithm~\ref{alg:B}\footnote{Note that, we can also extend~(\ref{two_point_unbiased}) for asynchronous problems and design the algorithm thereof. The lemmas and theorems proved in this paper can be easily adapted to this case as well. In Section~\ref{simulation}, we will compare these two gradient estimators empirically. However, the extension of~(\ref{two_point_biased}) is non-trivial. It can be verified that Lemma~\ref{unbiased_gradient} does not hold for the extension of~(\ref{two_point_biased}). We leave it as future work.}.

\begin{algorithm}[t]
	\caption{Asynchronous Zeroth-Order Residual Feedback}
	\label{alg:B}
	\begin{algorithmic}[1]
		\Require{sampling rate $p_i$ with $\sum_{i=1}^N p_i =1$,  decision variable $x^i_0$, smoothing parameter $\mu_i$ and step size $\alpha_i$ for all agents $i$. Set the iteration counter $t=0$ and let $T$ be the maximum number of iterations.}
		\For{$t\leq T$}
		\State{sample an index $i_t$ according to $\mathbb{P}(i_t=i)=p_i$}
		\State{sample $u^i\sim \mathcal{N}(0,I_{n_i})$}
		\State{query the function value $f(x+\mu_iu)$}
		\State{compute $G_{\mu_i}$ according to~(\ref{residual})} 
		\State{update local decision $x^i\leftarrow x^i-\alpha_iG_{\mu_i}(x^i)$}
		\State{update the time step counter $t\leftarrow t+1$}
		\EndFor
	\end{algorithmic}
\end{algorithm}
%

%

%

Without loss of generality, we assume that decision variables $x^i\in \mathbb{R}^{\Bar{n}}$ of all agents $i$ have the same dimensions, and that the step sizes and smoothing parameters of all agents are also the same, i.e.,  $\alpha_i=\alpha$ and $\mu_i=\mu$.
To analyze the convergence of Algorithm~\ref{alg:B}, we need to bound the second moment of the proposed gradient estimator $G_{\mu_i}(x_k)$.
However, under the asynchronous framework considered in this paper, there can be a random number of agents updating their local decision variables between the two queries made by agent $i$ at time steps $k$ and $k-M$ in \eqref{residual}. These updates by other agents introduce additional variance into the estimator~\eqref{residual} compared to the variance of the centralized estimator analyzed in \cite{zhang2020improving}. Next, we analyze the effect of the asynchronous updates on the second moment of the zeroth-order gradient estimator. To the best of our knowledge, this is the first time that a bound on the second moment of a zeroth-order gradient estimator is provided for asynchronous problems. An additional contribution of this work, is that the proof technique presented below can be extended to obtain similar results for the two-point gradient estimator~\eqref{two_point_unbiased}.

\begin{lm}\label{lemma_second_moments}
Let Assumptions~\ref{lipschitz} hold under the Asynchrony Model, and define by $\mathbb{E}[\norm{G_{\tilde{\mu}}(x_k)}^2]:=\mathbb{E}_{i_k} [\mathbb{E}_{u_{[k]},i_{[k-1]}}$ $[\norm{G_{\mu_i}(x_k)}^2|i_k=i]]$, where $u_{[k]}=(u_1,\dots,u_k)$ and $i_{[k]}=(i_1,\dots,i_k),$ Then, running the asynchronous Algorithm~\ref{alg:B}, we have that $\mathbb{E}[\norm{G_{\tilde{\mu}}(x_k)}^2]$ satisfies
\begin{equation}\label{equation_lemma_4_recursive}
\begin{split}
 \mathbb{E} & \left[\norm{G_{\tilde{\mu}}(x_k)}^2\right] \\
 \leq &  \frac{2\Bar{n}L_0^2\alpha^2 k}{\mu^2} \sum_{m=0}^{k-1} (1-p_{\min})^{m} \mathbb{E}\left[\norm{G_{\tilde{\mu}}(x_{k-m-1})}^2\right] \\
 &+ 4L_0^2\left((4+\Bar{n})^2+\Bar{n}^2\right), \\
\end{split}
\end{equation}
where $p_{\min}=\min_{i}p_i$, and the expectations are taken w.r.t. the sequence of random exploration directions $\{u_k\}$ and the sequence of random indices of activated agents $\{i_k\}$. 
\end{lm}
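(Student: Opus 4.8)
The plan is to bound $\norm{G_{\mu_i}(x_k)}^2$ directly from its definition~\eqref{residual} and then take the nested expectations. Starting from
\[
\norm{G_{\mu_i}(x_k)}^2 = \frac{\bigl(f(x_k+\mu u_k) - f(x_{k-M}+\mu u_{k-M})\bigr)^2}{\mu^2}\norm{u_k}^2,
\]
I would add and subtract a common reference value so that the numerator splits into manageable pieces. A natural choice is to write $f(x_k+\mu u_k) - f(x_{k-M}+\mu u_{k-M}) = \bigl(f(x_k+\mu u_k) - f(x_{k-M}+\mu u_k)\bigr) + \bigl(f(x_{k-M}+\mu u_k) - f(x_{k-M}+\mu u_{k-M})\bigr)$, or alternatively compare both queries to $f(x_k)$ and $f(x_{k-M})$ respectively and use the $L_0$-Lipschitz bound to get terms like $L_0\norm{x_k - x_{k-M}}$ plus the $O(\mu\norm{u_k})$ and $O(\mu\norm{u_{k-M}})$ smoothing terms. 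The key quantity that appears is $\norm{x_k - x_{k-M}}$, the drift of the joint decision vector between agent $i$'s two consecutive activations; by telescoping the update rule~\eqref{updating_rule}, this is $\norm{\sum_{\ell=k-M}^{k-1}\alpha G_{\mu_{i_\ell}}(x_\ell)} \le \alpha\sum_{\ell=k-M}^{k-1}\norm{G_{\mu_{i_\ell}}(x_\ell)}$, which is exactly what produces the sum over past second moments on the right-hand side of~\eqref{equation_lemma_4_recursive}.

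Next I would handle the randomness of $M$. Since at each step the activated agent is agent $i$ with probability $p_i \ge p_{\min}$, the event $\{M > m\}$ (agent $i$ not activated in any of the previous $m$ steps) has probability at most $(1-p_{\min})^m$. Conditioning on $M$, applying Cauchy–Schwarz to the sum $\bigl(\sum_{\ell=k-M}^{k-1}\norm{G_{\mu_{i_\ell}}(x_\ell)}\bigr)^2 \le M\sum_{\ell=k-M}^{k-1}\norm{G_{\mu_{i_\ell}}(x_\ell)}^2$, and bounding $M \le k$, I would then take expectations. The geometric tail of $M$ is what attaches the weight $(1-p_{\min})^m$ to the term $\mathbb{E}[\norm{G_{\tilde\mu}(x_{k-m-1})}^2]$ after re-indexing the sum $m = k - 1 - \ell$. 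The factor $k$ in the leading coefficient $\frac{2\bar n L_0^2 \alpha^2 k}{\mu^2}$ comes from the crude bound $M \le k$ inside the Cauchy–Schwarz step, and the $\bar n$ factor comes from taking the expectation of $\norm{u_k}^2 = \bar n$ against the $\norm{x_k-x_{k-M}}^2$ piece (using independence of $u_k$ from the past). The constant term $4L_0^2((4+\bar n)^2 + \bar n^2)$ should emerge from the smoothing contributions: terms involving $\mu\norm{u_k}$ and $\mu\norm{u_{k-M}}$ are divided by $\mu$ and multiplied by $\norm{u_k}$, giving quantities like $L_0^2\norm{u_k}^4$ and $L_0^2\norm{u_k}^2\norm{u_{k-M}}^2$ whose expectations are controlled by the Gaussian moment bounds $\mathbb{E}\norm{u}^2 = \bar n$ and $\mathbb{E}\norm{u}^4 \le (\bar n+4)^2$ (the standard bound used in~\cite{nesterov2017random}).

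The main obstacle I anticipate is carefully managing the conditional expectation structure declared in the lemma statement, namely $\mathbb{E}_{i_k}[\mathbb{E}_{u_{[k]}, i_{[k-1]}}[\,\cdot\,|i_k=i]]$. The index $M$ is a function of the history $i_{[k-1]}$, so $M$ and the iterates $x_{k-M},\dots,x_{k-1}$ and the direction $u_{k-M}$ are all correlated through the past, while $u_k$ is fresh. I would need to be disciplined about the order of conditioning: first condition on everything up to and including step $k-1$ (which fixes $M$, $x_{k-M}$, $x_k$, and $u_{k-M}$) and integrate out $u_k$ using its independence and known moments; then take expectation over $M$ using its geometric tail bound together with the already-established bounds $\mathbb{E}[\norm{G_{\tilde\mu}(x_\ell)}^2]$ for $\ell < k$; finally average over $i_k$, which is harmless since $p_{\min} \le p_{i_k}$ and the bound obtained is uniform in $i$. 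A secondary technical point is ensuring that $x_{k-M}$ and $u_{k-M}$ being independent of $u_k$ is genuinely valid — this holds because agent $i$'s query direction at its previous activation and the resulting iterate are determined before step $k$, and $u_k$ is drawn afresh at step $k$ — but it must be invoked explicitly, exactly as in the proof of Lemma~\ref{unbiased_gradient}.
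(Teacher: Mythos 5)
Your proposal follows essentially the same route as the paper: the same two-term split of the function difference, the Lipschitz bound producing $L_0^2\|x_k-x_{k-M}\|^2$, telescoping the drift through the update rule with Cauchy--Schwarz and the crude bound $M\le k$, the geometric tail $(1-p_{\min})^m$ for the age of agent $i$'s last activation, and the Gaussian moment bounds $\mathbb{E}\|u\|^2=\bar n$, $\mathbb{E}\|u\|^4\le(\bar n+4)^2$ for the additive constant. The one step you flag as the main obstacle --- decoupling $\mathbb{P}(M>m)$ from $\mathbb{E}[\|x_{k-m}-x_{k-m-1}\|^2]$ despite their correlation through the history --- is exactly where the paper spends most of its effort, resolving it by noting that whether agent $i$ is activated during steps $k-m,\dots,k-1$ is independent of the iterates generated before step $k-m$, so the conditional and unconditional second moments coincide.
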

\begin{proof}
%
Suppose that at time step $k$, agent $i$ is selected. Taking the second moment of $G_{\mu_i}$ and using equation~\eqref{residual}, we we have\footnote{To simplify the notation, when it is clear from the context, we drop the subscript of the expectation and the conditional event, e.g., $\mathbb{E}[\norm{G_{\mu_i}(x_k)}^2]:=\mathbb{E}_{u_{[k]},i_{[k-1]}}[\norm{G_{\mu_i}(x_k)|i_k=i}^2]$.}
\begin{equation}\label{second_moment_ineqaulity}
\begin{split}
 & \mathbb{E}_{u_{[k]},i_{[k-1]}}\left[\norm{G_{\mu_i}(x_k)}^2|i_k=i\right] \leq\\
 &  \mathbb{E}\left[\frac{\left(f(x_k+\mu_iu_{k})-f(x_{k-M}+\mu_iu_{k-M})\right)^2\norm{u_{k}}^2}{\mu_i^2}\right]  
\end{split}
\end{equation}
Notice that 
\begin{equation}\label{ab}
    \begin{split}
      & \left(f(x_k+\mu_iu_{k})-f(x_{k-M}+\mu_iu_{k-M})\right)^2 \\
      &\leq 2 \underbrace{ \left(f(x_k+\mu_iu_{k})-f(x_{k-M}+\mu_iu_{k})\right)^2}_\text{a} \\
      &+ 2  \underbrace{\left(f(x_{k-M}+\mu_iu_{k})-f(x_{k-M}+\mu_iu_{k-M})\right)^2}_\text{b}.
    \end{split}
\end{equation}
Substituting~(\ref{ab}) into~({\ref{second_moment_ineqaulity}}), we obtain that  
\begin{equation}\label{a+b_inequality}
\begin{split}
 & \mathbb{E}_{u_{[k]},i_{[k-1]}}\left[\norm{G_{\mu_i}(x_k)}^2|i_k=i\right]\\
 & \leq \mathbb{E}\left[\frac{2a+2b}{\mu_i^2}\norm{u_{k}}^2\right]\\
 & \leq \mathbb{E}\left[\frac{2L_0^2\norm{x_k-x_{k-M}}^2}{\mu_i^2}\norm{u_{k}}^2\right]+\mathbb{E}\left[\frac{2b}{\mu_i^2}\norm{u_{k}}^2\right]\\
 & \leq \frac{2L_0^2\Bar{n}}{\mu_i^2}\mathbb{E}\left[\norm{x_k-x_{k-M}}^2\right]+\mathbb{E}\left[\frac{2b}{\mu_i^2}\norm{u_{k}}^2\right],\\
\end{split}
\end{equation}
where the second inequality holds due to Lipschitzness of $f$ and the last inequality holds since $x_k-x_{k-M}$ is independent from $u_k$ and $\mathbb{E}[\norm{u_k}^2]=\Bar{n}.$ 
We first bound the second term in the right-hand-side of~\eqref{a+b_inequality}. Specifically, we have that 
\begin{equation}\label{b_inequality}
    \begin{split}
        &\mathbb{E}\left[\frac{2b}{\mu_i^2}\norm{u_{k}}^2\right] \leq  \mathbb{E}\left[2L_0^2\norm{u_{k}- u_{k-M}}^2\norm{u_{k}}^2\right]\\
        & \leq \mathbb{E}\left[4L_0^2\left(\norm{u_{k}}^2+\norm{u_{k-M}}^2\right)\norm{u_{k}}^2\right]\\
      & \leq \mathbb{E}\left[4L_0^2\norm{u_{k}}^4\right]+\mathbb{E}\left[4L_0^2\norm{u_{k-M}}^2\right] \mathbb{E}\left[\norm{u_{k}}^2\right]\\
      & \leq 4L_0^2\left((4+\Bar{n})^2+\Bar{n}^2\right),
    \end{split}
\end{equation}
where the first inequality holds due to Lipschitzness of $f$, the third inequality holds since $u_{k-M}$ is independent from $u_k$ and the last inequality follows from Lemma 1 in~\cite{nesterov2017random}. 

Next, we bound the first term in the right-hand-side of~\eqref{a+b_inequality} containing the second moment of $x_k-x_{k-M}.$
Given that agent $i$ updates at time step $k$, we can partition the sequence of all past updates into $k$ events $A^i_m=\{M=m\}$, where $A^i_m$ represents all sequences of updates such that the most recent update by agent $i$ is at global time step $k-m$. In particular, $A^i_k$ indicates that agent $i$ has not been updated before and $k$ is the first time that this agent gets updated. It is easy to see that the sets $\{A^i_m\}_{m=1}^k$ are disjoint and contain all possible 
sequences of updates by the team of agents.
Using the definition of these events, we can rewrite the conditional expectation of $\norm{x_k-x_{k-M}}^2$ as
\begin{equation}\label{k_k-m}
    \begin{split}
        & Y_i := \mathbb{E}_{u_{[k]},i_{[k-1]}}\left[\norm{x_k-x_{k-M}}^2|i_k=i\right]\\
        & = \sum_{m=1}^k \mathbb{E}_{u_{[k]}}\left[\norm{x_k-x_{k-m}}^2|A^i_m,i_k=i\right]\mathbb{P}(A^i_m).
    \end{split}
\end{equation}
Equation~(\ref{k_k-m}) can be rewritten as 
\begin{align*}
      Y_i  = & \sum_{m=1}^k\mathbb{E}\left[\norm{x_k-x_{k-m}}^2|A^i_m\right]\mathbb{P}(A^i_m) \\
      = &\sum_{m=1}^k\mathbb{E}\left[\norm{\sum_{l=0}^{m-1}(x_{k-l}-x_{k-l-1})}^2|A^i_m\right]\mathbb{P}(A^i_m)  \\
    \leq & \sum_{m=1}^k\mathbb{E}\left[m\sum_{l=0}^{m-1}\norm{(x_{k-l}-x_{k-l-1})}^2|A^i_m\right]\mathbb{P}(A^i_m), \\
\end{align*}
where the last inequality holds due to the fact that $\left(\sum_{m=1}^k a_m\right)^2\leq k\sum_{m=1}^k a_m^2.$
We first collect all the terms containing $\norm{x_k-x_{k-1}}^2$, which we denote by $\{Y_i:\norm{x_k-x_{k-1}}^2\}$. Then, we have that
\begin{align}
        &\{Y_i:\norm{x_k-x_{k-1}}^2\}
        = \sum_{m=1}^k m\mathbb{E}\left[\norm{x_k-x_{k-1}}^2|A^i_m\right]\mathbb{P}(A^i_m) \nonumber\\
        &\leq  k\sum_{m=1}^k \mathbb{E}\left[\norm{x_k-x_{k-1}}^2|A^i_m\right]\mathbb{P}(A^i_m) \nonumber \\
        &= k\mathbb{E} \left[\norm{x_k-x_{k-1}}^2\right]\label{yk},
\end{align}
where the last equation holds by the definition of conditional expectation.
Next we collect all the terms containing $\norm{x_{k-s}-x_{k-s-1}}^2$ for all $s\in\{1,\dots,k-1\}$. Specifically, we have that 
\begin{equation}\label{ys}
    \begin{split}
        & \{Y_i:\norm{x_{k-{s}}-x_{k-s-1}}^2\} \\
        &=  \sum_{m=s+1}^k m\mathbb{E}\left[\norm{x_{k-{s}}-x_{k-s-1}}^2|A^i_m\right]\mathbb{P}(A^i_m) \\
        & \leq k\sum_{m=s+1}^k  \mathbb{E}\left[\norm{x_{k-{s}}-x_{k-s-1}}^2|A^i_m\right]\mathbb{P}(A^i_m). \\
    \end{split}
\end{equation}
%
We claim that the right hand side of~\eqref{ys} satisfies the following equation
\begin{equation}\label{two_identity}
    \begin{split}
      &k\sum_{m=s+1}^k  \mathbb{E}\left[\norm{x_{k-{s}}-x_{k-s-1}}^2|A^i_m\right]\mathbb{P}(A^i_m)\\
      & =k\mathbb{P}(A^{ic}_{1:s})\mathbb{E}\left[\norm{x_{k-{s}}-x_{k-s-1}}^2\right],
    \end{split}
\end{equation}
where $A^{ic}_{1:s}=\cup_{m=s+1}^k A^i_m$. To see this, we first observe that
\begin{equation}\label{two_quantity}
    \begin{split}
  \mathbb{E}&\left[\norm{x_{k-{s}}-x_{k-s-1}}^2\right] \\
  = & \sum_{m=1}^k \mathbb{E}\left[\norm{x_{k-{s}}-x_{k-s-1}}^2|A^i_m\right]\mathbb{P}(A^i_m)\\
    =& \mathbb{E}\left[\norm{x_{k-{s}}-x_{k-s-1}}^2|A^{i}_{1:s}\right]\mathbb{P}(A^{i}_{1:s})\\
    &+\mathbb{E}\left[\norm{x_{k-{s}}-x_{k-s-1}}^2|A^{ic}_{1:s}\right]\mathbb{P}(A^{ic}_{1:s}),\\
    \end{split}
\end{equation}
%
where $A^{i}_{1:s}=\cup_{m=1}^s A^i_m$ and $A^{ic}_{1:s}$ is the complement of $A^{i}_{1:s}$. The second equality follows from the property of conditional expectation of disjoints events.
Note that $\mathbb{E}[\norm{x_{k-{s}}-x_{k-s-1}}^2|A^{i}_{1:s}]$ and $\mathbb{E}[\norm{x_{k-{s}}-x_{k-s-1}}^2|A^{ic}_{1:s}]$ are equal. Specifically, event $A^{i}_{1:s}$ and event $A^{ic}_{1:s}$ only differ after time step $k-s$, where $A^{i}_{1:s}$ contains all sequences of updates where $i$ update after $k-s$ and $A^{ic}_{1:s}$ contains all sequences of updates where $i$ does not updates after $k-s$. Since both events $A^{i}_{1:s}$ and $A^{ic}_{1:s}$ do not affect the agents' updates before time step $k-s$, we have that  
\begin{equation*}
    \begin{split}
  \mathbb{E}&\left[\norm{x_{k-{s}}-x_{k-s-1}}^2\right] 
    = \mathbb{E}\left[\norm{x_{k-{s}}-x_{k-s-1}}^2|A^{ic}_{1:s}\right].\\
    \end{split}
\end{equation*}
Combining the above equality with~(\ref{ys}), we have that
\begin{equation*}
    \begin{split}
      &k\sum_{m=s+1}^k  \mathbb{E}\left[\norm{x_{k-{s}}-x_{k-s-1}}^2|A^i_m\right]\mathbb{P}(A^i_m)\\
      & =k\mathbb{P}(A^{ic}_{1:s})\mathbb{E}\left[\norm{x_{k-{s}}-x_{k-s-1}}^2|A^{ic}_{1:s}\right]\\
      & =k\mathbb{P}(A^{ic}_{1:s})\mathbb{E}\left[\norm{x_{k-{s}}-x_{k-s-1}}^2\right],
    \end{split}
\end{equation*}
which completes the proof of~(\ref{two_identity}).
Then, we can bound $Y_i$ in \eqref{k_k-m} by combining~(\ref{yk}) and~(\ref{two_identity}) and have that
    \begin{align}
         \mathbb{E}&\left[\norm{x_k-x_{k-M}}^2\right]\leq k\mathbb{E}\left[\norm{x_k-x_{k-1}}^2\right] \nonumber\\ 
         &+k\sum_{m=1}^{k-1}\mathbb{P}(A^{ic}_{1:m})\mathbb{E}\left[\norm{x_{k-m}-x_{k-m-1}}^2\right].\label{bounding_k-m}
    \end{align}
By definition, we have 
$\mathbb{P}(A^i_m) = p_i(1-p_i)^{m-1}$ for $1\leq m <k,$ and 
$\mathbb{P}(A^i_k) = (1-p_i)^{k}$ for $m=k$, where $p_i$ is the probability of agent $i$ being sampled at each time step. As a result, $\mathbb{P}(A^{ic}_{1:m})=(1-p_i)^{m}.$
Substituting these probabilities into~(\ref{bounding_k-m}), we have that
\begin{equation}\label{compact_relation}
    \begin{split}
        &\mathbb{E}\left[\norm{x_k-x_{k-M}}^2\right] \\
        & \leq k \sum_{m=0}^{k-1} (1-p_{\min})^{m} \mathbb{E}\left[\norm{x_{k-m}-x_{k-m-1}}^2\right],
    \end{split}
\end{equation}where $p_{\min} =\min_i{p_i}$.
Substituting~(\ref{compact_relation}) and~(\ref{b_inequality}) into~(\ref{a+b_inequality}), we get that
\begin{equation}\label{a+b_inequality_final}
\begin{split}
 & \mathbb{E}\left[\norm{G_{\mu_i}(x_k)}^2\right]\\
 & \leq  \frac{2L_0^2\Bar{n}k}{\mu_i^2} \sum_{m=0}^{k-1} (1-p_{\min})^{m} \mathbb{E}\left[\norm{x_{k-m}-x_{k-m-1}}^2\right]\\
 & \quad + 4L_0^2\left((4+\Bar{n})^2+\Bar{n}^2\right).
\end{split}
\end{equation}
Recall that all expectations from the beginning of the proof are taken conditioned on the event $\{i_k=i\}$. Now, taking the expectation w.r.t. $i_k$ on both sides of \eqref{a+b_inequality_final}, and substituting the step size $\alpha$ and smoothing parameter $\mu$ into (\ref{a+b_inequality_final}), we get that
\begin{equation*}
\begin{split}
 &\mathbb{E}\left[\norm{G_{\tilde{\mu}}(x_k)}^2\right]\\
%
 &\leq  \frac{2\Bar{n}L_0^2\alpha^2k}{\mu^2}   \sum_{m=0}^{k-1}  (1-p_{\min})^{m} \mathbb{E}\left[\norm{G_{\tilde{\mu}}(x_{k-m-1})}^2\right]\\
&\quad + 4L_0^2\left((4+\Bar{n})^2+\Bar{n}^2\right),
\end{split}
\end{equation*}where $\mathbb{E}[\norm{x_{k-m}-x_{k-m-1}}^2] = \alpha^2 \mathbb{E}[\norm{G_{\tilde{\mu}}(x_{k-m-1})}^2]$ according to the update rule~\eqref{updating_rule} and the definition of $\mathbb{E}[\|G_{\tilde{\mu}}\|^2]$ as in Lemma~\ref{lemma_second_moments}.
The proof is complete. 
\end{proof}

Lemma~\ref{lemma_second_moments} indicates that the second moment of the zeroth-order gradient estimate at time step $k$ is related to the second moments of all previous gradient estimates. Specifically, the effect of the second moment of the past gradient estimates on the current estimate diminishes geometrically over time. 
Next, using Lemma~\ref{lemma_second_moments}, we present a bound on the accumulated second moments of the residual-feedback gradient estimates from $k=0$ to $T-1$, which we will later use to prove our main theorem.
\begin{lm}\label{lemma_recursive_G}
Let Assumptions~\ref{lipschitz} hold under the Asynchrony Model. Then, running the asynchronous updates Algorithm~\ref{alg:B}, we have that
%
\begin{equation*}
    \begin{split}
    \sum_{k=0}^{T-1} \mathbb{E}&\left[\norm{G_{\tilde{\mu}}(x_{k})}^2\right] \leq  \frac{1-\beta}{1-(\gamma+\beta)}\mathbb{E}\left[\norm{G_{\tilde{\mu}}(x_{0})}^2\right]\\
    &+(T-1)\frac{1-\beta}{1-(\gamma+\beta)}M - \frac{\gamma}{\left(1-(\gamma+\beta)\right)^2}M,
    \end{split}
\end{equation*}
where $\gamma =\frac{2\Bar{n}L_0^2\alpha^2(T-1)}{\mu^2} $, $\beta=1-p_{\min}$, $M=4L_0^2\left((4+\Bar{n})^2+\Bar{n}^2\right)$ and provided with $0<\gamma+\beta <1.$
\end{lm}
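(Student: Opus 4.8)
The plan is to turn the convolution-type recursion of Lemma~\ref{lemma_second_moments} into a scalar linear recursion that admits a closed form, and then sum it. Write $g_k:=\mathbb{E}[\norm{G_{\tilde{\mu}}(x_k)}^2]$, $\beta:=1-p_{\min}$, $\gamma:=\frac{2\bar{n}L_0^2\alpha^2(T-1)}{\mu^2}$ and $M:=4L_0^2((4+\bar{n})^2+\bar{n}^2)$. Since the index $k$ appearing in Lemma~\ref{lemma_second_moments} satisfies $k\le T-1$ and every $g_j\ge 0$, I would first replace the factor $\frac{2\bar{n}L_0^2\alpha^2 k}{\mu^2}$ by its maximal value $\gamma$, obtaining the uniform inequality $g_k\le \gamma\sum_{m=0}^{k-1}\beta^m g_{k-1-m}+M$ for all $0\le k\le T-1$.

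Next I would introduce the auxiliary sequence $h_k:=\gamma\sum_{m=0}^{k-1}\beta^m g_{k-1-m}$, so that $h_0=0$, $h_1=\gamma g_0$, and $g_k\le h_k+M$. Peeling off the $m=0$ term gives the one-step identity $h_{k+1}=\gamma g_k+\beta h_k$, and substituting $g_k\le h_k+M$ turns it into the scalar recursion $h_{k+1}\le(\gamma+\beta)h_k+\gamma M$. Because $0<\gamma+\beta<1$ by hypothesis, unrolling yields $h_k\le(\gamma+\beta)^{k-1}\gamma g_0+\frac{\gamma M}{1-(\gamma+\beta)}\bigl(1-(\gamma+\beta)^{k-1}\bigr)$ for $k\ge 1$. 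I would then sum $g_k\le h_k+M$ over $k=0,\dots,T-1$ using $\sum_{k=1}^{T-1}(\gamma+\beta)^{k-1}=\frac{1-(\gamma+\beta)^{T-1}}{1-(\gamma+\beta)}$ and $\sum_{k=1}^{T-1}\bigl(1-(\gamma+\beta)^{k-1}\bigr)=(T-1)-\frac{1-(\gamma+\beta)^{T-1}}{1-(\gamma+\beta)}$, and collect the outcome into $g_0$-terms, $(T-1)M$-terms and geometric-tail terms. The algebraic identity $\gamma-(\gamma+\beta)=-\beta$ is what fuses $(T-1)M$ with $\frac{\gamma M(T-1)}{1-(\gamma+\beta)}$ into the coefficient $\frac{1-\beta}{1-(\gamma+\beta)}$, and it likewise produces the leading $\frac{1-\beta}{1-(\gamma+\beta)}g_0$ term together with the $-\frac{\gamma}{(1-(\gamma+\beta))^2}M$ correction.

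The step I expect to be the main obstacle is the bookkeeping of the surviving $(\gamma+\beta)^{T-1}$ remainder. Both the $g_0$-part and the $M$-part of the closed form for $h_k$ carry a factor $(\gamma+\beta)^{k-1}$, and after summation these leave a net residual proportional to $(\gamma+\beta)^{T-1}\bigl(\tfrac{M}{1-(\gamma+\beta)}-g_0\bigr)$. Disposing of this residual is immediate on the $g_0$ coefficient (one keeps $1+\gamma/(1-(\gamma+\beta))=\frac{1-\beta}{1-(\gamma+\beta)}$ as an upper bound by dropping a nonpositive term), but for the $M$-part the residual has the unfavourable sign and must be absorbed rather than merely discarded; landing exactly on the stated inequality therefore hinges on controlling it through $0<\gamma+\beta<1$ together with the magnitude of the initial second moment $\mathbb{E}[\norm{G_{\tilde{\mu}}(x_0)}^2]$ (otherwise one only obtains the same bound inflated by $\frac{\gamma M(\gamma+\beta)^{T-1}}{(1-(\gamma+\beta))^2}$). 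An alternative route that avoids the auxiliary sequence is to sum the recursion of Lemma~\ref{lemma_second_moments} directly, interchange the order of the resulting double sum, evaluate the inner geometric sum $\sum_m\beta^m$, and solve the ensuing inequality for $\sum_{k=0}^{T-1}g_k$; this delivers the first two terms of the bound at once and again reduces the remaining work to the same geometric-tail accounting.
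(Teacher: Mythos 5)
Your argument is correct and is essentially the route the paper takes: the paper isolates exactly your uniform recursion as an abstract statement about nonnegative sequences (Lemma~\ref{lemma_recursive} in the Appendix), unrolls it by repeated substitution --- equivalent to your auxiliary sequence $h_k$ --- to get the same per-iterate closed form, and then sums. Your worry about the surviving $(\gamma+\beta)^{T-1}$ residual in the $M$-part is also well founded: the paper's appendix proof writes the summation step as an equality and silently drops the positive remainder $\frac{\gamma(\gamma+\beta)^{T-1}}{\left(1-(\gamma+\beta)\right)^2}M$, so the stated constant is marginally optimistic; the honest bound is yours with that remainder retained, and the discrepancy is immaterial downstream because Theorem~\ref{theorem_1} discards the negative term anyway.
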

The proof follows from Lemma~\ref{lemma_recursive} in the Appendix.
%

\begin{thm}\label{theorem_1}
Let Assumptions~\ref{lipschitz} hold under the Asynchrony Model. Moreover, run the asynchronous algorithm Algorithm~\ref{alg:B} for $T$ iterations and let $\Tilde{x}$ be uniformly randomly selected from $T$ iterations. Then, selecting the step size $\alpha=\frac{\sqrt{p_{\min}}}{T^{\frac{2}{3}}}$ and the smoothing parameter $\mu=\frac{2L_0\sqrt{\Bar{n}}}{T^{\frac{1}{6}}}$, we have $\mathbb{E}\left[{\norm{\nabla f(\tilde{x})}}^2\right]\leq \mathcal{O}(\Bar{n}^3T^{-\frac{1}{3}}).$
\end{thm}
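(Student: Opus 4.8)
The plan is to run a standard smoothness-based descent argument on the true objective $f$ and to feed the noise term into the second-moment estimates already established in Lemmas~\ref{lemma_second_moments} and~\ref{lemma_recursive_G}, deferring the choice of $\alpha$ and $\mu$ to the very end. Since $f$ is $L_1$-smooth, the characterization~\eqref{equivalent_smooth} applied along the update~\eqref{updating_rule} gives, conditionally on $x_k$,
\[
 f(x_{k+1}) \le f(x_k) - \alpha\langle\nabla f(x_k),G_{\mu_{i_k}}(x_k)\rangle + \tfrac{L_1\alpha^2}{2}\norm{G_{\mu_{i_k}}(x_k)}^2 .
\]
Taking expectation over the activated index $i_k$ (with $\mathbb{P}(i_k=i)=p_i$) and the exploration direction $u_k$ and invoking Lemma~\ref{unbiased_gradient}, the inner product becomes $-\alpha\sum_i p_i\langle\nabla_i f(x_k),\nabla_i f_{\mu_i}(x_k)\rangle$, using that $\nabla_i f_{\mu_i}$ is supported on block $i$. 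Writing $\nabla_i f_{\mu_i}=\nabla_i f+(\nabla_i f_{\mu_i}-\nabla_i f)$, bounding the perturbation by~\eqref{gradient_error}, and applying Young's inequality peels off $-\tfrac{\alpha}{2}\sum_i p_i\norm{\nabla_i f(x_k)}^2\le-\tfrac{\alpha p_{\min}}{2}\norm{\nabla f(x_k)}^2$ (here I use $\sum_i\norm{\nabla_i f}^2=\norm{\nabla f}^2$ and $\sum_i p_i\norm{\nabla_i f}^2 \ge p_{\min}\norm{\nabla f}^2$), at the price of an additive bias of order $\alpha\mu^2 L_1^2(\bar n+3)^3$. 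With the definition of $\mathbb{E}[\norm{G_{\tilde\mu}(x_k)}^2]$ from Lemma~\ref{lemma_second_moments}, this yields a one-step inequality
\[
 \mathbb{E}[f(x_{k+1})]\le\mathbb{E}[f(x_k)]-\tfrac{\alpha p_{\min}}{2}\mathbb{E}[\norm{\nabla f(x_k)}^2]+c_1\alpha\mu^2+\tfrac{L_1\alpha^2}{2}\mathbb{E}[\norm{G_{\tilde\mu}(x_k)}^2],
\]
with $c_1\propto L_1^2(\bar n+3)^3$.

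Next I would sum this inequality over $k=0,\dots,T-1$, use $f^*\le f(x_T)$ to telescope the left side into $f(x_0)-f^*$, substitute $\sum_{k=0}^{T-1}\mathbb{E}[\norm{G_{\tilde\mu}(x_k)}^2]$ by the bound of Lemma~\ref{lemma_recursive_G}, divide by $\tfrac{\alpha p_{\min}T}{2}$, and note that $\tfrac1T\sum_{k=0}^{T-1}\mathbb{E}[\norm{\nabla f(x_k)}^2]=\mathbb{E}[\norm{\nabla f(\tilde x)}^2]$ for the uniformly sampled iterate $\tilde x$. This gives, schematically,
\[
 \mathbb{E}[\norm{\nabla f(\tilde x)}^2]\lesssim\frac{f(x_0)-f^*}{\alpha p_{\min}T}+\frac{\mu^2 L_1^2(\bar n+3)^3}{p_{\min}}+\frac{L_1\alpha}{p_{\min}}\cdot\frac{1-\beta}{1-(\gamma+\beta)}\cdot M,
\]
with $\gamma,\beta,M$ as in Lemma~\ref{lemma_recursive_G} and $M=\mathcal{O}(\bar n^2 L_0^2)$, valid provided $0<\gamma+\beta<1$.

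It then remains to substitute $\alpha=\sqrt{p_{\min}}\,T^{-2/3}$ and $\mu=2L_0\sqrt{\bar n}\,T^{-1/6}$ and simplify. A direct computation gives $\gamma=\tfrac{2\bar n L_0^2\alpha^2(T-1)}{\mu^2}=\tfrac{p_{\min}(T-1)}{2T}\le\tfrac{p_{\min}}{2}$, so the stability requirement $0<\gamma+\beta<1$ of Lemma~\ref{lemma_recursive_G} holds (indeed $\gamma+\beta\le 1-\tfrac{p_{\min}}{2}$) and $\tfrac{1-\beta}{1-(\gamma+\beta)}=\tfrac{2T}{T+1}=\mathcal{O}(1)$. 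Plugging these into the three terms: the first is $\tfrac{2(f(x_0)-f^*)}{p_{\min}^{3/2}}T^{-1/3}$, the second is of order $\tfrac{L_0^2L_1^2}{p_{\min}}\bar n(\bar n+3)^3\,T^{-1/3}$, and the third is of order $\tfrac{L_0^2L_1}{\sqrt{p_{\min}}}\bar n^2\,T^{-2/3}$; tracking the dimension factors and retaining the slowest decay in $T$ collapses the whole bound to $\mathcal{O}(\bar n^3 T^{-1/3})$.

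The main obstacle is the self-referential structure of the noise bound: by Lemma~\ref{lemma_second_moments}, $\mathbb{E}[\norm{G_{\tilde\mu}(x_k)}^2]$ depends—through $\norm{x_k-x_{k-M}}^2$ and the random number of other agents' updates interleaved between an agent's two consecutive queries—on the second moments of \emph{all} earlier gradient estimates, which in turn are controlled by the same step size. Lemma~\ref{lemma_recursive_G} already resolves this recursion, but only under $\gamma+\beta<1$ with $\gamma\propto\alpha^2 T/\mu^2$; the genuine difficulty is therefore to choose $\alpha$ small enough relative to $\mu$ and $p_{\min}$ to guarantee this stability, yet large enough that the leading $1/(\alpha p_{\min}T)$ term still vanishes—and to do so while keeping the $\mathcal{O}(\mu^2)$ smoothing bias small. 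The exponents $2/3$ and $1/6$ are exactly what simultaneously satisfies all three constraints; everything after verifying them is routine algebra.
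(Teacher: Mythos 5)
Your proposal is correct and its skeleton is the same as the paper's: a one-step descent inequality, unbiasedness of $G_{\mu_i}$ to kill the first-order noise, the recursion of Lemma~\ref{lemma_second_moments} resolved by Lemma~\ref{lemma_recursive_G}, and the verification that $\gamma+\beta<1$ under the stated step size (your computation $\gamma=p_{\min}(T-1)/(2T)$ and $\tfrac{1-\beta}{1-(\gamma+\beta)}=\tfrac{2T}{T+1}$ matches what the paper needs). The one genuine difference is where you run the descent lemma: the paper applies \eqref{equivalent_smooth} to the smoothed function $f_{\mu_i}$, so the inner product collapses exactly to $-\alpha\norm{\nabla_i f_{\mu_i}(x_k)}^2$ and the conversion to $\norm{\nabla_i f}^2$ is done afterwards via \eqref{gradient_error_inequality}; you apply it to $f$ itself, which leaves the cross term $-\alpha\langle\nabla_i f,\nabla_i f_{\mu_i}\rangle$ that you must split with Young's inequality and \eqref{gradient_error}. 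Your variant costs one extra elementary step but buys an exact telescope of $\mathbb{E}[f(x_k)]$, whereas the paper telescopes $\mathbb{E}[f_{\mu_{i_k}}(x_{k+1})]$ against $\mathbb{E}[f_{\mu_{i_{k+1}}}(x_{k+1})]$, quantities whose smoothing index changes from step to step and which strictly speaking need \eqref{function_error} to be matched up. So your route is, if anything, slightly cleaner at that point; both yield the same rate.

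One bookkeeping caveat: with the theorem's stated $\mu=2L_0\sqrt{\Bar{n}}T^{-1/6}$, your own smoothing-bias term $\mu^2L_1^2(\Bar{n}+3)^3/p_{\min}$ is of order $\Bar{n}^4T^{-1/3}$, not $\Bar{n}^3T^{-1/3}$, so your final "collapse" to $\mathcal{O}(\Bar{n}^3T^{-1/3})$ does not follow from your own accounting. This is inherited from an inconsistency in the paper itself (the proof actually substitutes $\mu=2L_0T^{-1/6}$ and $\alpha=\sqrt{p_{\min}}/(\sqrt{\Bar{n}}T^{2/3})$, which do give $\Bar{n}^3$ and the same $\gamma$), but you should either adopt that parameter choice or state the bound as $\mathcal{O}(\Bar{n}^4T^{-1/3})$ for the parameters you used.
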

\begin{proof}
Substituting $x_{k+1}$ and $x_k$ in the version of~\eqref{equivalent_smooth} for the smoothed function $f_{\mu_i}$, we obtain that 
    \begin{align}
    &f_{\mu_i}(x_{k+1}) \nonumber\\
     &\leq  f_{\mu_i}(x_{k}) + \langle\nabla f_{\mu_i}(x_k),x_{k+1}-x_{k}\rangle +\frac{L_1}{2} \norm{x_{k+1}-x_{k}}^2 \nonumber\\
     &= f_{\mu_i}(x_{k})-\alpha \langle \nabla_i  f_{\mu_i}(x_k),\Delta_{i,k}\rangle -\alpha \norm{\nabla_i  f_{\mu_i}(x_k)}^2 \nonumber \\ & \quad +\frac{L_1\alpha^2}{2}\norm{G_{\mu_i}(x_k)}^2,\label{theorem_1_inequality}
    \end{align}
where $\Delta_{i,k}:=G_{\mu_i}(x_k)-\nabla_i f_{\mu_i}(x_k)$. The first equality follows by~(\ref{updating_rule}) and the fact that $\langle\nabla f_{\mu_i}(x_k),x_{k+1}-x_{k}\rangle=\langle\nabla_i f_{\mu_i}(x_k),x_{k+1}-x_{k}\rangle$, which holds since $x_{k+1}$ and $x_k$ only differ at block $i$. 
Taking expectation w.r.t. $u_{[k]}$ and $i_{[k-1]}$ on both sides of~\eqref{theorem_1_inequality} conditioned on the event $\{i_k=i\}$, we get that 
\begin{equation}\label{f_mu_i}
    \begin{split}
     & \mathbb{E}\left[ \norm{\nabla_i  f_{\mu_i}(x_k)}^2\right]\leq \frac{ \mathbb{E}\left[f_{\mu_i}(x_{k})\right] -\mathbb{E}\left[f_{\mu_i}(x_{k+1})\right]}{\alpha}\\
     &+\frac{L_1\alpha}{2}\mathbb{E}\left[\norm{G_{\mu_i}(x_k)}^2  \right],
    \end{split}
\end{equation}
where the inner-product term $\langle \nabla_i  f_{\mu_i}(x_k),\Delta_{i,k}\rangle$ disappears since $\mathbb{E}[\Delta_{i,k}]=0$ due to Lemma~\ref{unbiased_gradient}.
%
According to Lemma~\ref{approximation_errors} and using the fact that $(a+b)^2\leq 2a^2+2b^2,$ we have
\begin{equation}\label{gradient_error_inequality}
    \norm{\nabla_i f(x)}^2 \leq 2\norm{\nabla_i f_{\mu_i}(x)}^2 + \mu_i^2{L_1}^2 (n_i+3)^{3}.
\end{equation}
Combining ({\ref{f_mu_i}}) and~(\ref{gradient_error_inequality}) and, we obtain that  
\begin{equation}\label{f_i_pre}
    \begin{split}
     & \frac{1}{2}\mathbb{E} \left[\norm{\nabla_i  f(x_k)}^2\right]\leq \frac{ \mathbb{E}\left[f_{\mu_i}(x_{k})\right] -\mathbb{E}\left[f_{\mu_i}(x_{k+1})\right]}{\alpha}\\
     &+\frac{L_1\alpha}{2}\mathbb{E}\left[\norm{G_{\mu_i}(x_k)}^2  \right]+\frac{1}{2}\mu^2{L_1}^2 (\Bar{n}+3)^{3},
    \end{split}
\end{equation}
where the last term follows by substituting the common smoothing parameter $\mu$ and agents' dimension $\Bar{n}$. Taking expectation on both sides of~\eqref{f_i_pre} w.r.t. $i_k$, we have that
\begin{equation}\label{f_i}
    \begin{split}
     & \frac{1}{2}\mathbb{E}_{i_k}\left[\mathbb{E}_{u_{[k]},i_{[k-1]}} \left[\norm{\nabla_i  f(x_k)}^2|i_k=i\right]\right]\\
     &\leq \frac{ \mathbb{E}\left[f_{\tilde{\mu}}(x_{k})\right] -\mathbb{E}\left[f_{\tilde{\mu}}(x_{k+1})\right]}{\alpha}\\
     &+\frac{L_1\alpha}{2}\mathbb{E}\left[\norm{G_{\tilde{\mu}}(x_k)}^2  \right]+\frac{1}{2}\mu^2{L_1}^2 (\Bar{n}+3)^{3},
    \end{split}
\end{equation}
where $\mathbb{E}[f_{\tilde{\mu}}(x_{k})]:=\mathbb{E}_{i_k}[\mathbb{E}_{u_{[k]},i_{[k-1]}}[f_{\mu_{i_k}}(x_k)|i_{k}=i]]$ and $\mathbb{E}[f_{\tilde{\mu}}(x_{k+1})]:=\mathbb{E}_{i_k}[\mathbb{E}_{u_{[k]},i_{[k-1]}}[f_{\mu_{i_k}}(x_{k+1})|i_{k}=i]].$ $\mathbb{E}[\norm{G_{\tilde{\mu}}(x_k)}^2]$ follows from the definition in Lemma~\ref{lemma_second_moments}. Next, we show that the left hand side of~\eqref{f_i} satisfies
\begin{align}\label{p_min_inequ}
  &\mathbb{E}_{i_k}[\mathbb{E}_{u_{[k]},i_{[k-1]}} [\norm{\nabla_{i_k}  f(x_k)}^2|i_k=i]] \nonumber\\
  &\geq p_{\min}\mathbb{E}_{u_{[k]},i_{[k]}}\norm{\nabla  f(x_k)}^2.  
\end{align}
To see this, by definitions of the projected gradient as in Section~\ref{formulation}, since $\nabla_i  f(x_k)$ is only nonzero at block $i$, we have $\norm{\nabla  f(x_k)}^2=\sum_{i=1}^N \norm{\nabla_i  f(x_k)}^2$. Therefore,
we can further get that
\begin{align}\label{sum_nabla}
  &\mathbb{E}_{u_{[k]},i_{[k]}}\norm{\nabla  f(x_k)}^2  =  \sum_{i=1}^N \mathbb{E}_{u_{[k]},i_{[k]}} \norm{\nabla_i  f(x_k)}^2\nonumber\\
  &=\sum_{i=1}^N\mathbb{E}_{u_{[k]},i_{[k-1]}} [\norm{\nabla_i  f(x_k)}^2|i_k=i],
\end{align}
where the second equality holds since $x_k$ is independent from $i_k$.
Therefore, according to (31), to show the inequality (30), it is sufficient to show $\mathbb{E}_{i_k}[\mathbb{E}_{u_{[k]},i_{[k-1]}} [\norm{\nabla_{i_k}  f(x_k)}^2|i_k=i]] \geq p_{\min}\sum_{i=1}^N\mathbb{E}_{u_{[k]},i_{[k-1]}} [\norm{\nabla_i  f(x_k)}^2|i_k=i]$. This is simple to prove because $\mathbb{E}_{i_k}[\mathbb{E}_{u_{[k]},i_{[k-1]}} [\norm{\nabla_{i_k}  f(x_k)}^2|i_k=i]] = \sum_{i} p_i \mathbb{E}_{u_{[k]},i_{[k-1]}} [\norm{\nabla_i  f(x_k)}^2|i_k=i].$ Therefore, inequality (30) is true.
Substituting~\eqref{p_min_inequ} into~\eqref{f_i} and then summing~\eqref{f_i} from $k=0$ to $T-1$, we have that
\begin{align}
     & \frac{p_{\min}}{2}\sum_{k=0}^{T-1}\mathbb{E} \left[\norm{\nabla  f(x_k)}^2\right] \leq  \frac{ \mathbb{E}\left[f_{\tilde{\mu}}(x_{0})\right]  -\mathbb{E}\left[f_{\tilde{\mu}}(x_{T})\right]}{\alpha} \nonumber\\
     &+\sum_{k=0}^{T-1}\frac{L_1\alpha}{2}\mathbb{E}\left[\norm{G_{\tilde{\mu}}(x_k)}^2  \right]+\frac{\mu^2}{2}{L_1}^2 (\Bar{n}+3)^{3} T\label{telescope_sum_2}.
\end{align}
%
%
%
%
Applying Lemma~\ref{lemma_recursive_G} to~(\ref{telescope_sum_2}), we get that 
\begin{align}
    & \frac{p_{\min}}{2}\sum_{k=0}^{T-1}\mathbb{E}\left[ \norm{\nabla  f(x_k)}^2 \right]\leq \nonumber\\
    &\frac{ \mathbb{E}\left[f_{\tilde{\mu}}(x_{0})\right]-\mathbb{E}\left[f_{\tilde{\mu}}(x_{T})\right]}{\alpha}+\frac{L_1\alpha}{2}(T-1)\frac{1-\beta}{1-(\gamma+\beta)}M \nonumber\\
    & +\frac{L_1\alpha}{2}\frac{1-\beta}{1-(\gamma+\beta)}\mathbb{E}\left[\norm{G_{\tilde{\mu}}(x_0)}^2\right]  +\frac{\mu^2}{2}{L_1}^2 (\Bar{n}+3)^{3} T\nonumber\\
    &  - \frac{L_1\alpha}{2}\frac{\gamma}{\left(1-(\gamma+\beta)\right)^2}M \label{final_inequa},
\end{align}
where $\gamma,\beta$ and $M$ are as defined in Lemma~\ref{lemma_recursive_G}.
Selecting $\mu=\frac{2L_0}{T^{\frac{1}{6}}}$ and $\alpha=\frac{\sqrt{p_{\min}}}{\sqrt{\Bar{n}}T^{\frac{2}{3}}}$, we have $\gamma\leq\frac{p_{\min}}{2}$ and $1-(\gamma+\beta)\geq \frac{p_{\min}}{2} $. Substituting these values into~\eqref{final_inequa} and omitting the negative term, we obtain that 
\begin{equation}\label{final_bound}
    \begin{split}
     &\frac{p_{\min}}{2}\sum_{k=0}^{T-1}\mathbb{E}\left[ \norm{\nabla  f(x_k)}^2 \right]\leq  \frac{ \mathbb{E}\left[f_{\tilde{\mu}}(x_{0})\right]-f_{\tilde{\mu}}^*}{\sqrt{p_{\min}}}\sqrt{\Bar{n}}T^{\frac{2}{3}}\\
     &+L_1\frac{\sqrt{p_{\min}}}{\sqrt{\Bar{n}}T^{\frac{2}{3}}}\mathbb{E}\left[\norm{G_{\tilde{\mu}}(x_0)}^2\right] +L_1\frac{\sqrt{p_{\min}}}{\sqrt{\Bar{n}}}T^{\frac{1}{3}}M\\
     &+2L_0^2\mu^2{L_1}^2 (\Bar{n}+3)^{3} T^{\frac{2}{3}},
    \end{split}
\end{equation}
where $f_{\tilde{\mu}}^*$ is a lower bound on $\mathbb{E}[f_{\tilde{\mu}}(x)]$. The existance of such lower bound is due to~(\ref{function_error}), the definition of $\mathbb{E}[f_{\tilde{\mu}}(x)]$ and Assumption~\ref{lipschitz}.
%
The result in Theorem~\ref{theorem_1} follows by dividing both sides of the above inequality by $T$. 
\end{proof}
The convergence rate shown above has the same order as that of applying the residual-feedback gradient estimator~\eqref{residual-feedback} to optimize a stochastic objective function as shown in~\cite{zhang2020improving}. This is because of this asynchronous scenario, the updates conducted by the other agents between two queries of a given agent introduce noise in the function evaluations from the perspective of this given agent. Furthermore, the bound on the non-stationarity of the solution in~\eqref{final_bound} increases as $p_{\min}$ becomes smaller. In practice, it means that the convergence of Algorithm~\ref{alg:B} slows down if one of the agents is activated less frequently than others.  
\section{SIMULATIONS}\label{simulation}
In this section, we demonstrate the effectiveness of the proposed asynchronous distributed zeroth-order optimization algorithm on a distributed feature learning example common in Internet of Things (IoT) applications. All the experiments are conducted using Python 3.8.5 on a 2017 iMac with 4.2GHz Quad-Core Intel Core i7 and 32GB 2400MHz DDR4.

Specifically, we consider the biomarker learning example described in \cite{bent2020digital}, where a network of health monitoring edge devices collect heterogeneous raw input data $\{D_{i,j}\}_{i=1:N}$, e.g., different types of biosignals. Then, each device encodes its local raw data $D_{i,j}$ into a biomarker $d_{i,j}$ via a feature extraction function $\phi(D_{i,j}; x_i)$, e.g., a neural network with weights $x_i$, and sends it to a third-party entity that uses the collected biomarkers as predictors to learn a disease diagnosis for user $j$. The goal of the edge device $i$ is to learn a better feature extraction function $\phi(\cdot ; x_i)$ to help the third-party entity to make better predictions.
In practice, the prediction process at the third-party entity can be complicated and hard to model using an explicit function. Moreover, it may need to remain confidential. As a result, the edge device cannot obtain gradient information from this third-party entity. In the meantime, it is unreasonable to expect that the edge devices can update their feature extraction models synchronously or that they know the other edge devices' feature extraction models and parameters. Therefore, this problem presents an ideal case for the asynchronous distributed zeroth-order method proposed in this paper.

For simulation proposes, in this section, we assume that the third-party entity uses the logistic regression model
\begin{align} \label{eqn:log_model}
    P(y_j ; d_{j}) = 1/\big(1 + \exp{(-y_i W^T d_j)}\big),
\end{align}
where $j$ represents the data point index, $y_j = \{1, -1\}$ and $d_j$ denote the label and predictors for data point $j$, and $W_T$ is a fixed classifier parameter. Specifically, let $d_j = [d_{1,j}, \dots, d_{N,j}]^T$ represent the concatenated biomarker vector. The agents aim to collaboratively minimize the following loss function
\begin{align} \label{eqn:log_loss}
    f(\{x_i\}_{i=1:N}) = -\frac{1}{J}\sum_{j = 1}^{J}\log\big(P(y_j; d_j(\cdot ; \{x_i\}_{i=1:N})\big),
\end{align}
where $J$ is the total number of data points.

Next, we apply the proposed Algorithm~\ref{alg:B} to this distributed feature learning problem and compare its performance to an asynchronous extension of the centralized two-point gradient estimate~\eqref{two_point_unbiased} defined by
\begin{equation}\label{two_point_as}
     G_{\mu_i}(x_k) =  \frac{f(x_k+\mu_i u_k)-f(x_{k-M})}{\mu_i}u_k,
\end{equation}
where $x_{k-M}$ is the most recent decision point agent $i$ queries at iteration $k-M$. 

Note that the convergence of the stochastic gradient descent update \eqref{updating_rule} using the asynchronous two-point estimator~\eqref{two_point_as} has not been studied yet. We compare our proposed algorithm to the one with \eqref{two_point_as} simply to demonstrate the efficacy of our proposed approach.
Specifically, we consider a network of $5$ agents who collaboratively deal with $J=20$ data samples. The feature extraction model $\phi(\cdot; x_i)$ at agent $i$ is a single layer neural network with the input $D_{i,j}\in\mathbb{R}^{10}$ and a single output $d_{i,j}\in\mathbb{R}$. The activation function of the neural network is the sigmoid function. The weight of the neural network at agent $i$ is denoted as $x_i$, which is initialized by sampling from a standard Gaussian distribution.
We apply both the asynchronous residual-feedback gradient estimator~\eqref{residual} and the asynchronous two-point gradient estimator~({\ref{two_point_as}}) to solve this problem. 

Specifically, for both gradient estimators, we run $10$ trials. In addition, the smoothing parameter is $\mu = 0.1$, and the stepsizes $\alpha$ for gradient estimators \eqref{residual} and ~({\ref{two_point_as}}) are selected as $0.5$ and $0.5$, respectively, so that they both achieve their fastest convergence speed during $10$ trials of experiments. At each iteration, each agent has equal probability to be activated.
\begin{figure}[t]
    \centering
    \includegraphics[width = .95\columnwidth]{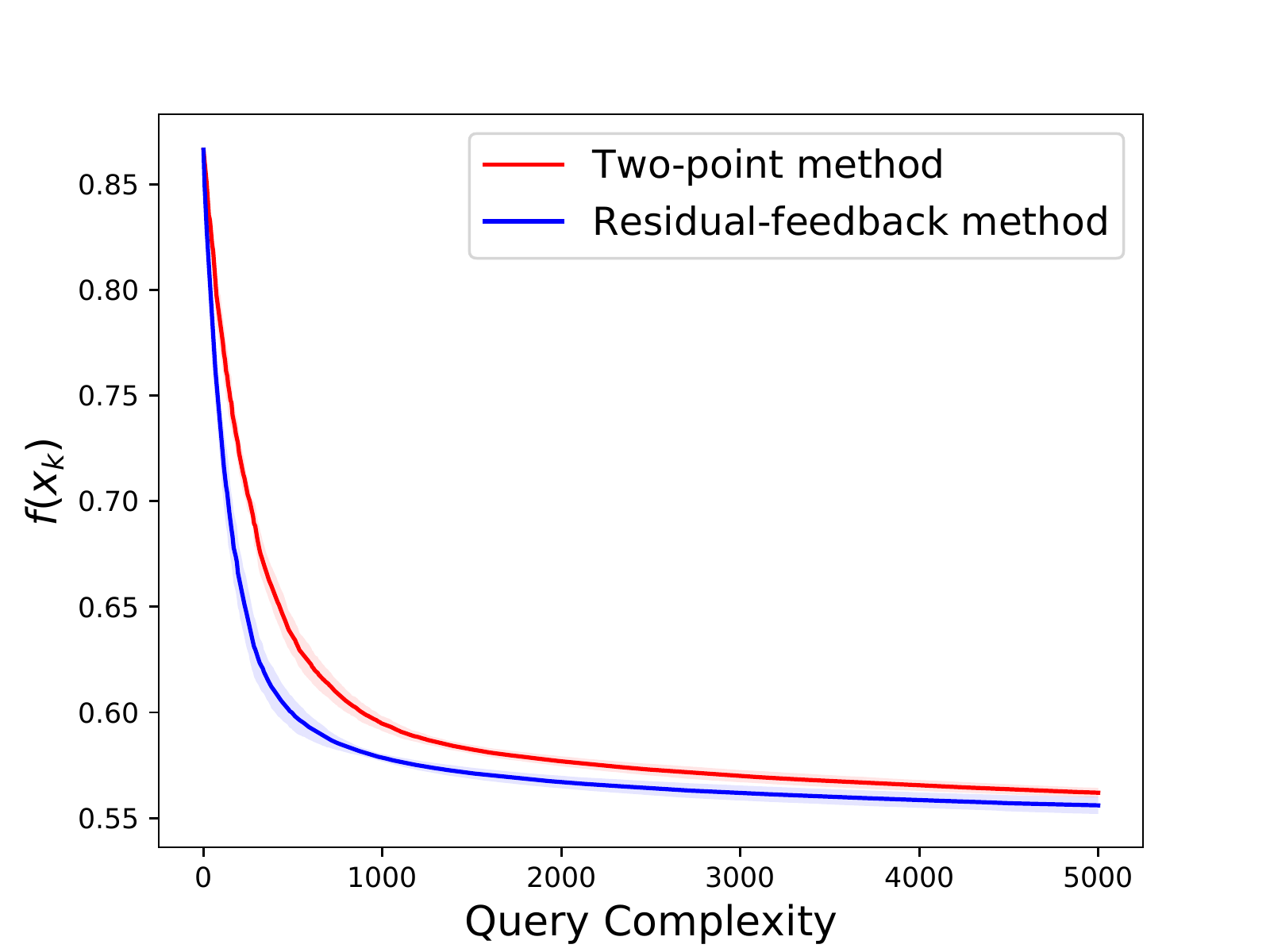}
    \caption{Convergence results of the distributed feature learning problem. The red curve is obtained by applying the asynchronous two-point gradient estimator~({\ref{two_point_as}}) and the blue curve is by the asynchronous residual-feedback estimator~\eqref{residual}. The y axis denotes the value of the loss function~\eqref{eqn:log_loss} and the x axis represents the number of queries made in total by the team of agents. The shaded area around each curve represents the standard deviation of the function values over $10$ trials.}
    \label{fig:log_regression}
\end{figure}

The comparative performance results of using the two zeroth-order gradient estimators~\eqref{residual} and~({\ref{two_point_as}}) are presented in Figure~\ref{fig:log_regression}. We observe that during $10$ trials, asynchronous learning with the residual-feedback gradient estimator~\eqref{residual} converges faster than asynchronous learning with the two-point gradient estimator~({\ref{two_point_as}}). This is because the asynchronous residual-feedback gradient estimator~\eqref{residual} is subject to almost the same level of variance as the two-point gradient estimator~({\ref{two_point_as}}), but can make twice the number of updates compared to the two-point gradient estimator~({\ref{two_point_as}}) for the same number of queries. Note that we compare the two algorithms in terms of the number of queries rather than the number of updates, because the number of queries corresponds to the length of the global wall time required to run the algorithm.

\section{CONCLUSIONS}\label{conclusion}
In this paper, we proposed an asynchronous residual-feedback gradient estimator for distributed zeroth-order optimization, which estimates the gradient of the global cost function by querying the value of the function once at each time step. More importantly, only the local decision vector is needed for estimating the gradient and no communication among agents is required. We showed that the convergence rate of the proposed method matches the results for centralized residual-feedback methods when the function evaluation has noise. Numerical experiments on a distributed logistic regression problem are presented to show the effectiveness of the proposed method.   

\addtolength{\textheight}{-3cm}   

\section*{APPENDIX}

\begin{lm}\label{lemma_recursive}
Consider a sequence of non-negative real numbers $\{V_k\}$ with the following relations for all $1\leq k \leq T-1,$ provided with $0<\gamma+\beta <1$,
$$V_k \leq \gamma \left(V_{k-1} + \beta V_{k-2}+ \dots + \beta^{k-1} V_0\right)+M,$$ where $M$ is a contant, then we have 
$$V_k \leq \gamma(\gamma +\beta)^{k-1}V_0+\frac{1-\beta -\gamma(\gamma+\beta)^{k-1}}{1-(\gamma+\beta)}M.$$ In addition,
\begin{equation*}
    \begin{split}
    \sum_{k=0}^{T-1} V_k \leq & \frac{1-\beta}{1-(\gamma+\beta)}V_0+(T-1)\frac{1-\beta}{1-(\gamma+\beta)}M \\
      &- \frac{\gamma}{\left(1-(\gamma+\beta)\right)^2}M .
    \end{split}
\end{equation*}
\end{lm}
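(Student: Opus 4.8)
The plan is to collapse the multi-term recursion into a scalar first-order one. For $k\ge 1$ introduce the weighted partial sums
$$S_k:=V_{k-1}+\beta V_{k-2}+\dots+\beta^{k-1}V_0=\sum_{j=0}^{k-1}\beta^{\,k-1-j}V_j ,$$
so that the hypothesis reads $V_k\le \gamma S_k+M$ for $1\le k\le T-1$, with $S_1=V_0$. The algebraic fact that makes everything work is $S_{k+1}=V_k+\beta S_k$; feeding the hypothesis into it gives the scalar recursion $S_{k+1}\le(\gamma+\beta)S_k+M$ for $1\le k\le T-1$. I would unroll this by a one-line induction, summing the finite geometric series (legitimate since $0<\gamma+\beta<1$), to get
$$S_k\le(\gamma+\beta)^{k-1}V_0+\frac{1-(\gamma+\beta)^{k-1}}{1-(\gamma+\beta)}M ,\qquad 1\le k\le T .$$

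The pointwise claim is then immediate: substitute this into $V_k\le\gamma S_k+M$ and merge the two $M$-terms using
$$\gamma\,\frac{1-(\gamma+\beta)^{k-1}}{1-(\gamma+\beta)}M+M=\frac{1-\beta-\gamma(\gamma+\beta)^{k-1}}{1-(\gamma+\beta)}M ,$$
which is precisely the stated estimate for $1\le k\le T-1$ (the index $k=0$ is the trivial $V_0\le V_0$).

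For the summed bound I would add $V_0$ to $\sum_{k=1}^{T-1}$ of the pointwise estimate. Two geometric sums appear, both equal to $\sum_{k=1}^{T-1}(\gamma+\beta)^{k-1}=\frac{1-(\gamma+\beta)^{T-1}}{1-(\gamma+\beta)}$: one multiplied by $\gamma V_0$, and one (with a minus sign, from the $-\gamma(\gamma+\beta)^{k-1}$ inside the $M$-coefficient) multiplied by $\gamma M/(1-(\gamma+\beta))$; alongside these sits the flat term $(T-1)\frac{1-\beta}{1-(\gamma+\beta)}M$. Bounding the $V_0$-coefficient by $1+\gamma\frac{1-(\gamma+\beta)^{T-1}}{1-(\gamma+\beta)}\le 1+\frac{\gamma}{1-(\gamma+\beta)}=\frac{1-\beta}{1-(\gamma+\beta)}$ and collecting the constants delivers the asserted inequality.

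The one step I would watch carefully is the geometric-tail bookkeeping in this last computation: the exact evaluation carries a factor $1-(\gamma+\beta)^{T-1}$ in front of the $\gamma M/(1-(\gamma+\beta))^2$ term, and replacing it by $1$ to reach the clean closed form makes that negative term \emph{more} negative, i.e. the target slightly stronger than what the summation literally gives. One handles this either by keeping the tail explicitly or by appealing to the regime in which the lemma is invoked (Theorem~\ref{theorem_1}, where $\gamma\le p_{\min}/2$ and $1-(\gamma+\beta)\ge p_{\min}/2$, so $(\gamma+\beta)^{T-1}$ is exponentially small in $T$ and the gap is absorbed into the final $\mathcal{O}(\cdot)$ rate). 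Everything upstream — spotting $S_{k+1}=V_k+\beta S_k$, the scalar recursion, and its closed form — is routine.
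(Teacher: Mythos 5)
Your proof is correct and is essentially the paper's own argument: the paper unrolls the recursion directly, repeatedly absorbing $V_{k-1}\leq\gamma(V_{k-2}+\dots+\beta^{k-2}V_0)+M$ into the weighted sum to pick up a factor $(\gamma+\beta)$ at each stage, which is exactly your scalar recursion $S_{k+1}\leq(\gamma+\beta)S_k+M$ written out without naming $S_k$; the pointwise bound then comes out identically. Your caution about the geometric tail in the summed bound is well founded and in fact flags a slip in the paper's own proof: the exact sum of the pointwise bounds produces the term $-\frac{\gamma\left(1-(\gamma+\beta)^{T-1}\right)}{\left(1-(\gamma+\beta)\right)^2}M$, whereas the lemma states the strictly smaller $-\frac{\gamma}{\left(1-(\gamma+\beta)\right)^2}M$ (the paper presents this step as an equality, which it is not, and the inequality goes the wrong way for that term since $M>0$). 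As you observe, this is harmless where the lemma is used, because Theorem~\ref{theorem_1} discards the negative term entirely; but strictly the second display of the lemma should either retain the factor $1-(\gamma+\beta)^{T-1}$ or simply omit the negative term.
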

\begin{proof}
Fix some $k=K$. We have
\begin{equation*}
    \begin{split}
      V_{K} \leq &\gamma \left(V_{K-1} + \beta V_{K-2}+ \dots + \beta^{K-1} V_0\right)+M \\
      %
      \leq & \gamma(\gamma+\beta) \left(V_{K-2}+\dots+\beta^{K-2} V_0 \right) +\gamma M + M \\
    \end{split}
\end{equation*}
Repeat the above process, we obtain that 
\begin{equation*}
    \begin{split}
      V_{K} \leq & \gamma(\gamma+\beta)^{K-2} \left(\gamma V_{0}+M+\beta V_0 \right) \\
      &+\gamma \sum_{k=0}^{K-2}(\gamma+\beta)^k M  +M \\
      %
      =& \gamma(\gamma+\beta)^{K-1}V_0+ \frac{1-\beta -\gamma(\gamma+\beta)^{K-1}}{1-(\gamma+\beta)}M,
    \end{split}
\end{equation*}
which completes the first part of the proof. Summing $V_k$ from $0$ to $T-1$, we obtain that  
\begin{equation*}
    \begin{split}
      \sum_{k=0}^{T-1} V_k =& \sum_{k=1}^{T-1}\left( \gamma(\gamma+\beta)^{k-1}V_0+ \frac{1-\beta -\gamma(\gamma+\beta)^{k-1}}{1-(\gamma+\beta)}M\right) \\
      & + V_0\\
      = & \frac{1-\beta}{1-(\gamma+\beta)}V_0+(T-1)\frac{1-\beta}{1-(\gamma+\beta)}M \\
      &- \frac{\gamma}{\left(1-(\gamma+\beta)\right)^2}M,
    \end{split}
\end{equation*}
which complete the proof of the lemma. 
\end{proof}



\bibliographystyle{IEEEtran}
\bibliography{root}

\begin{thebibliography}{10}
\providecommand{\url}[1]{#1}
\csname url@samestyle\endcsname
\providecommand{\newblock}{\relax}
\providecommand{\bibinfo}[2]{#2}
\providecommand{\BIBentrySTDinterwordspacing}{\spaceskip=0pt\relax}
\providecommand{\BIBentryALTinterwordstretchfactor}{4}
\providecommand{\BIBentryALTinterwordspacing}{\spaceskip=\fontdimen2\font plus
\BIBentryALTinterwordstretchfactor\fontdimen3\font minus
  \fontdimen4\font\relax}
\providecommand{\BIBforeignlanguage}[2]{{%
\expandafter\ifx\csname l@#1\endcsname\relax
\typeout{** WARNING: IEEEtran.bst: No hyphenation pattern has been}%
\typeout{** loaded for the language `#1'. Using the pattern for}%
\typeout{** the default language instead.}%
\else
\language=\csname l@#1\endcsname
\fi
#2}}
\providecommand{\BIBdecl}{\relax}
\BIBdecl

\bibitem{konevcny2016federated}
J.~Kone{\v{c}}n{\`y}, H.~B. McMahan, D.~Ramage, and P.~Richt{\'a}rik,
  ``Federated optimization: Distributed machine learning for on-device
  intelligence,'' \emph{arXiv preprint arXiv:1610.02527}, 2016.

\bibitem{zhang2019distributed}
Y.~Zhang and M.~M. Zavlanos, ``Distributed off-policy actor-critic
  reinforcement learning with policy consensus,'' in \emph{2019 IEEE 58th
  Conference on Decision and Control (CDC)}.\hskip 1em plus 0.5em minus
  0.4em\relax IEEE, 2019, pp. 4674--4679.

\bibitem{schmidt2009distributed}
D.~A. Schmidt, C.~Shi, R.~A. Berry, M.~L. Honig, and W.~Utschick, ``Distributed
  resource allocation schemes,'' \emph{IEEE Signal Processing Magazine},
  vol.~26, no.~5, pp. 53--63, 2009.

\bibitem{raffard2004distributed}
R.~L. Raffard, C.~J. Tomlin, and S.~P. Boyd, ``Distributed optimization for
  cooperative agents: Application to formation flight,'' in \emph{2004 43rd
  IEEE Conference on Decision and Control (CDC)(IEEE Cat. No. 04CH37601)},
  vol.~3.\hskip 1em plus 0.5em minus 0.4em\relax IEEE, 2004, pp. 2453--2459.

\bibitem{boyd2011distributed}
S.~Boyd, N.~Parikh, and E.~Chu, \emph{Distributed optimization and statistical
  learning via the alternating direction method of multipliers}.\hskip 1em plus
  0.5em minus 0.4em\relax Now Publishers Inc, 2011.

\bibitem{nedic2009distributed}
A.~Nedic and A.~Ozdaglar, ``Distributed subgradient methods for multi-agent
  optimization,'' \emph{IEEE Transactions on Automatic Control}, vol.~54,
  no.~1, pp. 48--61, 2009.

\bibitem{chatzipanagiotis2015augmented}
N.~Chatzipanagiotis, D.~Dentcheva, and M.~M. Zavlanos, ``An augmented
  lagrangian method for distributed optimization,'' \emph{Mathematical
  Programming}, vol. 152, no.~1, pp. 405--434, 2015.

\bibitem{zhang2018consensus}
Y.~Zhang and M.~M. Zavlanos, ``A consensus-based distributed augmented
  lagrangian method,'' in \emph{2018 IEEE Conference on Decision and Control
  (CDC)}.\hskip 1em plus 0.5em minus 0.4em\relax IEEE, 2018, pp. 1763--1768.

\bibitem{bergstra2012random}
J.~Bergstra and Y.~Bengio, ``Random search for hyper-parameter optimization.''
  \emph{Journal of machine learning research}, vol.~13, no.~2, 2012.

\bibitem{tu2019autozoom}
C.-C. Tu, P.~Ting, P.-Y. Chen, S.~Liu, H.~Zhang, J.~Yi, C.-J. Hsieh, and S.-M.
  Cheng, ``Autozoom: Autoencoder-based zeroth order optimization method for
  attacking black-box neural networks,'' in \emph{Proceedings of the AAAI
  Conference on Artificial Intelligence}, vol.~33, no.~01, 2019, pp. 742--749.

\bibitem{bertsimas2007learning}
D.~Bertsimas and A.~J. Mersereau, ``A learning approach for interactive
  marketing to a customer segment,'' \emph{Operations Research}, vol.~55,
  no.~6, pp. 1120--1135, 2007.

\bibitem{mertikopoulos2019learning}
P.~Mertikopoulos and Z.~Zhou, ``Learning in games with continuous action sets
  and unknown payoff functions,'' \emph{Mathematical Programming}, vol. 173,
  no.~1, pp. 465--507, 2019.

\bibitem{flaxman2004online}
A.~D. Flaxman, A.~T. Kalai, and H.~B. McMahan, ``Online convex optimization in
  the bandit setting: gradient descent without a gradient,'' \emph{arXiv
  preprint cs/0408007}, 2004.

\bibitem{nesterov2017random}
Y.~Nesterov and V.~Spokoiny, ``Random gradient-free minimization of convex
  functions,'' \emph{Foundations of Computational Mathematics}, vol.~17, no.~2,
  pp. 527--566, 2017.

\bibitem{duchi2015optimal}
J.~C. Duchi, M.~I. Jordan, M.~J. Wainwright, and A.~Wibisono, ``Optimal rates
  for zero-order convex optimization: The power of two function evaluations,''
  \emph{IEEE Transactions on Information Theory}, vol.~61, no.~5, pp.
  2788--2806, 2015.

\bibitem{zhang2020improving}
Y.~Zhang, Y.~Zhou, K.~Ji, and M.~M. Zavlanos, ``Improving the convergence rate
  of one-point zeroth-order optimization using residual feedback,'' \emph{arXiv
  preprint arXiv:2006.10820}, 2020.

\bibitem{tsitsiklis1986distributed}
J.~Tsitsiklis, D.~Bertsekas, and M.~Athans, ``Distributed asynchronous
  deterministic and stochastic gradient optimization algorithms,'' \emph{IEEE
  transactions on automatic control}, vol.~31, no.~9, pp. 803--812, 1986.

\bibitem{agarwal2012distributed}
A.~Agarwal and J.~C. Duchi, ``Distributed delayed stochastic optimization,'' in
  \emph{2012 IEEE 51st IEEE Conference on Decision and Control (CDC)}.\hskip
  1em plus 0.5em minus 0.4em\relax IEEE, 2012, pp. 5451--5452.

\bibitem{zhong2008asynchronous}
M.~Zhong and C.~G. Cassandras, ``Asynchronous distributed optimization with
  minimal communication,'' in \emph{2008 47th IEEE Conference on Decision and
  Control}.\hskip 1em plus 0.5em minus 0.4em\relax IEEE, 2008, pp. 363--368.

\bibitem{cai2021zeroth}
H.~Cai, Y.~Lou, D.~McKenzie, and W.~Yin, ``A zeroth-order block coordinate
  descent algorithm for huge-scale black-box optimization,'' \emph{arXiv
  preprint arXiv:2102.10707}, 2021.

\bibitem{hajinezhad2017zeroth}
D.~Hajinezhad, M.~Hong, and A.~Garcia, ``Zeroth order nonconvex multi-agent
  optimization over networks,'' \emph{arXiv preprint arXiv:1710.09997}, 2017.

\bibitem{hajinezhad2018gradient}
D.~Hajinezhad and M.~M. Zavlanos, ``Gradient-free multi-agent nonconvex
  nonsmooth optimization,'' in \emph{2018 IEEE Conference on Decision and
  Control (CDC)}.\hskip 1em plus 0.5em minus 0.4em\relax IEEE, 2018, pp.
  4939--4944.

\bibitem{zhang2020cooperative}
Y.~Zhang and M.~M. Zavlanos, ``Cooperative multi-agent reinforcement learning
  with partial observations,'' \emph{arXiv preprint arXiv:2006.10822}, 2020.

\bibitem{tang2020zeroth}
Y.~Tang, Z.~Ren, and N.~Li, ``Zeroth-order feedback optimization for
  cooperative multi-agent systems,'' in \emph{2020 59th IEEE Conference on
  Decision and Control (CDC)}.\hskip 1em plus 0.5em minus 0.4em\relax IEEE,
  2020, pp. 3649--3656.

\bibitem{durrett1999essentials}
R.~Durrett, \emph{Essentials of stochastic processes}.\hskip 1em plus 0.5em
  minus 0.4em\relax Springer, 1999, vol.~1.

\bibitem{bent2020digital}
B.~Bent, K.~Wang, E.~Grzesiak, C.~Jiang, Y.~Qi, Y.~Jiang, P.~Cho, K.~Zingler,
  F.~I. Ogbeide, A.~Zhao \emph{et~al.}, ``The digital biomarker discovery
  pipeline: An open-source software platform for the development of digital
  biomarkers using mhealth and wearables data,'' \emph{Journal of Clinical and
  Translational Science}, pp. 1--8, 2020.

\end{thebibliography}


\end{document}